\newtheorem{theorem}{Theorem}[section]
\newtheorem{proposition}{Proposition}
\theoremstyle{definition}
\newtheorem{definition}[theorem]{Definition}
\newcommand{\eps}{\varepsilon}
\definecolor{Red}{rgb}{1,0,0}	
\title[Transition map and shadowing lemma] {Transition map and shadowing lemma for normally hyperbolic invariant manifolds
}%
\author[Amadeu Delshams, Marian Gidea, and Pablo Rold\'{a}n ]{Amadeu
Delshams$^\dag$, Marian Gidea$^\ddag$ and Pablo Rold\'{a}n $^\dag$  }
\subjclass{Primary,
37J40;  
37C50; 
37C29; 
Secondary,
37B30}
\keywords{Hamiltonian instability, Arnold diffusion, the three-body problem. }%
\email{Amadeu.Delshams@upc.edu }
\email{M-Gidea@neiu.edu}%
\email{Pablo.Roldan@upc.edu }
\thanks{
$^\dag$ Research of A.D. and P.R. was partially supported by MICINN-FEDER grant
MTM2009-06973 and CUR-DIUE grant 2009SGR859.}
\thanks{$^\ddag$ Research of M.G. was partially supported by NSF grants: DMS-0601016 and DMS-0635607.}
\begin{document}
\maketitle

\centerline{\scshape Amadeu Delshams}
\medskip
{\footnotesize
 \centerline{Departament de Matem\'atica Aplicada I, ETSEIB-UPC}
   \centerline{08028 Barcelona, Spain}
} 

\medskip

\centerline{\scshape Marian Gidea}
\medskip
{\footnotesize
\centerline{School of Mathematics, Institute for Advanced Study}
   \centerline{Princeton, NJ 08540, USA, and}
 \centerline{ Department of Mathematics, Northeastern Illinois University}
   \centerline{Chicago, IL 60625, USA}}

\medskip
\centerline{\scshape Pablo Rold\'{a}n}
\medskip
{\footnotesize
 \centerline{Departament de Matem\'atica Aplicada I, ETSEIB-UPC}

   \centerline{08028 Barcelona, Spain}
} 

\bigskip

 \centerline{(Communicated by the associate editor name)}




\begin{abstract}
For a given a normally hyperbolic invariant manifold, whose stable and unstable
manifolds intersect transversally, we consider several tools and techniques to detect trajectories with prescribed itineraries:
the scattering map, the transition map, the method of correctly aligned windows, and the shadowing lemma. We provide an user's guide on how to apply these tools and techniques to detect unstable orbits in Hamiltonian systems. This consists in the following steps: (i) computation of the scattering map and of the transition map for a flow, (ii) reduction  to the scattering map and to the transition map, respectively, for the return map to some surface of section, (iii) construction of sequences of windows within the surface of section, with the successive pairs of windows correctly aligned, alternately, under the transition map,  and under some power of the inner map, (iv) detection of trajectories which follow closely those windows. We illustrate this strategy with two models: the large gap problem for nearly integrable Hamiltonian systems, and the the spatial circular restricted three-body problem.
\end{abstract}

\section{Introduction}

Consider a normally hyperbolic invariant manifold  for a flow or a map, and assume that the stable and unstable manifolds of the normally hyperbolic invariant manifold have a transverse intersection along a homoclinic manifold.  One can distinguish   an inner dynamics, associated to the restriction of the flow or of the map to the normally hyperbolic invariant manifold, and an outer dynamics, associated to the homoclinic orbits. There exist pseudo-orbits obtained by alternately following the inner dynamics and the outer dynamics for some finite periods of time. An important question on the  dynamics is whether there exist true orbits with similar behavior.
In this paper, we develop a toolkit of instruments and techniques to detect true orbits near a normally hyperbolic invariant manifold, that alternatively follow the inner dynamics and the outer dynamics,  for all time. Some of the tools discussed below have already been used in other works. The aim of this paper is to provide a general recipe on how to make a systematic use of these tools in general situations.

The first tool is the scattering map, which is defined on the normally hyperbolic invariant manifold, and assigns to the foot of an unstable fiber passing through a point in the homoclinic manifold, the foot of the corresponding stable fiber that passes through the same point in the homoclinic manifold. This tool, sometimes referred as the homoclinic map, has been used in \cite{Easton78,Garcia00,DelshamsLS08a}, and  subsequently refined and analyzed  in
\cite{DelshamsLS2000}. The scattering map can be defined  both in the flow case and in the map
case. In Section \ref{sec:nhim} we recall some background on normally hyperbolic invariant manifolds and Lambda Lemma. In Section \ref{sec:scatteringmap}  we describe the relationship between the scattering map for a flow and the scattering map  for  the return map to a surface of section. We note that the scattering map is defined in terms of the geometric structure, however it is not dynamically defined -- there is no actual orbit that is given by the scattering map.

The second tool that we discuss is the transition map, that actually follows the homoclinic orbits for a prescribed time. The transition map  can be computed in terms of the scattering map. Again, we will have a transition map for the flow and one for  the return map, and we will describe the relationships between them. The transition map is presented in Section \ref{sec:transition}.

The third tool is the topological method of correctly aligned windows (see \cite{GideaZ04a}), which is used to detect orbits with prescribed itineraries in a dynamical system. A window is a homeomorphic copy of a multi-dimensional rectangle, with a choice of an exit direction and of an entry direction.  A window is correctly aligned with another window if  the
image of the first window  crosses the second window all the way
through and across its exit set. This method is reviewed briefly in Section \ref{section:topological}.

The fourth tool is a shadowing lemma type of result for a normally hyperbolic invariant manifold, presented in Section \ref{sec:shadowing}.
The assumption is that a bi-infinite sequence of windows lying in the normally hyperbolic invariant manifold is given, with the consecutive pairs of windows being correctly aligned, alternately,  under the transition map (outer map), and under some power of the inner map.  The role of the windows is to approximate the location of orbits.
Then there exists a true orbit that follows closely these windows, in the prescribed order. To apply this lemma for a normally hyperbolic invariant manifold  for a map, one needs to reduce the dynamics from the continuous case to the discrete case by considering the return map to a surface of section, and construct the sequence of correctly aligned windows  for the return map. For this situation, the relationships between the scattering map for the flow and the scattering map for the return map,  and between  the transition map for the flow and the transition map for the  return map, explored in Section \ref{sec:scatteringmap} and Section \ref{sec:transition}, are useful.

A remarkable feature of these tools is that they can be used for both analytic arguments and rigorous numerical verifications. The scattering map and the transition map can be computed explicitly in concrete systems. The main advantage is that they can be used to reduce the dimensionality of the problem: from the phase space of a flow to a normally hyperbolic invariant manifold for the flow, and further to the normally hyperbolic invariant manifold for the return map to a surface of section. The shadowing lemma  also plays a key role in reducing the dimensionality of the problem: it requires the verification of topological conditions in the normally hyperbolic invariant manifold for the return map to conclude the existence of trajectories in the phase space of the flow. In numerical applications, reducing the number of dimensions of the objects computed is very crucial. The potency of these tools in numerical application is illustrated in \cite{DelshamsGR10}.

The main motivation for developing these tools resides with the instability problem for Hamiltonian systems. In the Appendix we describe  two models where the above  techniques  can be applied to show the existence of unstable orbits. The first model is the large gap problem for nearly integrable Hamiltonian systems. The second model is the  spatial circular restricted three-body problem.

In conclusion, we provide a practical recipe for finding trajectories with prescribed itineraries for a normally hyperbolic invariant manifold with the property that its stable and unstable manifolds have a transverse intersection along a homoclinic manifold:
\begin{itemize}
\item Compute the scattering map associated to the homoclinic manifold.
\item For some prescribed forward and backwards integration times, compute the corresponding transition  map.
\item If necessary,  reduce the dynamics from a flow to the return map via some surface of section. Determine the normally hyperbolic invariant manifold relative to the surface of section, and compute  the inner map -- the restriction of the return map relative to the normally hyperbolic invariant manifold.
\item Compute the scattering map and the transition map for the return map.
\item Construct windows within the normally hyperbolic invariant manifold relative to the surface of section, with the the property that the consecutive pairs of windows are correctly aligned, alternately,  under the transition map  and under some power of the inner map.
\item Apply the  shadowing lemma stated in  Theorem \ref{lem:shadowing1} to conclude  that there exist orbits that follow closely these windows.
\end{itemize}

\section{Preliminaries} \label{sec:nhim}

In this section we review the concepts of normal hyperbolicity for flows and maps,  normally hyperbolic invariant manifold for the return map to a surface of section, and we state a version of the Lambda Lemma that will be used in the subsequent sections.

\subsection{Normally hyperbolic invariant manifolds}
In this section we recall  the concept of a normally hyperbolic
invariant manifolds for a map and for a flow, following
\cite{Fenichel74,HirschPS77}.

Let $M$ be a $C^r$-smooth, $m$-dimensional manifold (without boundary), with $r\geq 1$,
and  $\Phi:M\times \mathbb{R}\to M$ a $C^r$-smooth flow
on $M$.

\begin{definition} A submanifold  (possibly with boundary) $\Lambda$ of $M$ is said to
be a normally hyperbolic invariant manifold for $\Phi$ if
$\Lambda$ is invariant under $\Phi$, there exists a splitting of
the tangent bundle of $TM$ into sub-bundles
\[TM=E^u\oplus E^s\oplus T\Lambda,\]
that are invariant under $d\Phi^t$ for all $t\in\mathbb{R}$, and there exist a constant $C>0$
and rates $0<\beta<\alpha$, such that for all $x\in\Lambda$ we have
\[\begin{split}
v\in E^s_x  \Leftrightarrow \|D\Phi^t(x)(v)\|\leq Ce^{-\alpha t}\|v\|  \textrm{ for all } t\geq 0,\\
v\in E^u_x  \Leftrightarrow \|D\Phi^t(x)(v)\|\leq Ce^{\alpha t}\|v\|  \textrm{ for all } t\leq 0,\\
v\in T_x\Lambda \Leftrightarrow \|D\Phi^t(x)(v)\|\leq Ce^{\beta|t|}
\|v\| \textrm{ for all } t\in\mathbb{R}.
\end{split}\]
\end{definition}

It follows that there exist stable and unstable manifolds
of $\Lambda$, as well as stable and unstable manifolds of each
point $x\in\Lambda$, which are defined by
\[\begin{split}
W^s(\Lambda)=&\{y\in M\,|\, d(\Phi^t(y),\Lambda)\leq C_ye^{-\alpha t} \textrm{ for all } t\geq 0\},\\
W^u(\Lambda)=&\{y\in M\,|\, d(\Phi^t(y),\Lambda)\leq C_ye^{\alpha t} \textrm{ for all } t\leq 0\},\\
W^s(x)=&\{y\in M\,|\, d(\Phi^t(y),\Phi^t(y)(x))\leq C_{x,y}e^{-\alpha t}\textrm{ for all } t\geq 0\},\\
W^u(x)=&\{y\in M\,|\, d(\Phi^t(y),\Phi^t(y)(x))\leq C_{x,y}e^{\alpha
t} \textrm{ for all } t\leq 0\},
\end{split}\]
for some constants $C_y,C_{x,y}>0$.

The stable and unstable manifolds of $\Lambda$ are foliated by
stable and unstable manifolds of points, respectively, i.e.,
$W^s(\Lambda)=\bigcup_{x\in\Lambda}W^s(x)$ and
$W^u(\Lambda)=\bigcup_{x\in\Lambda}W^u(x)$.

In the sequel we will assume that $\Lambda$ is a compact and
connected manifold. With no other assumptions,  $E^s_x$ and $E^u_x$ depend
continuously (but non-smoothly) on $x\in M$; thus the dimensions of $E^s_x$ and $E^u_x$
are independent of $x$. Below we only consider the case when
the dimensions of the stable and unstable bundles are equal. We
denote $n=\dim(E^s_x)=\dim(E^u_x)$, $l=\dim(T_x\Lambda)$, where
$2n+l=m$.

The smoothness of the invariant objects defined by the normally
hyperbolic structure depends on the rates $\alpha$ and $\beta$. Let
 $\ell$ be a positive integer satisfying $1\leq \ell<\min\{r,
\alpha/\beta\}$. The manifold $\Lambda$ is
$C^\ell$-smooth. The stable and unstable manifolds $W^s(\Lambda)$
and $W^u(\Lambda)$ are $C^{\ell-1}$-smooth. The splittings $E^s_x$
and $E^u_x$ depend $C^{\ell-1}$-smoothly on $x$. The stable and
unstable fibers $W^s(x)$ and $W^u(x)$ are $C^r$-smooth. The stable and
unstable fibers $W^s(x)$ and $W^u(x)$ depend $C^{\ell-1-j}$-smoothly
on $x$ when $W^s(x),W^u(x)$ are endowed with the $C^j$-topology. In
the sequel we will assume that the rates are such that there exists
an integer $\ell\geq 2$ as above, and that  all the manifolds and maps considered below are at least
$C^k$-smooth, with $2\leq k\leq \ell$.

The notion of normal hyperbolicity for maps is very similar. Let $F:M\to M$ be a $C^r$-smooth map on $M$.

\begin{definition} A submanifold $\Lambda$ of $M$ is said to
be a normally hyperbolic invariant manifold for $F$ if $\Lambda$ is
invariant under $F$, there exists a splitting of the tangent bundle
of $TM$ into sub-bundles
\[TM=E^u\oplus E^s\oplus T\Lambda,\]
that are invariant under $dF$, and there exist a constant $C>0$ and
rates $0<\lambda<\mu^{-1}<1$, such that for all $x\in\Lambda$ we
have
\[\begin{split}
v\in E^s_x  \Leftrightarrow \|DF^k_x(v)\|\leq C\lambda^k\|v\|  \textrm{ for all } k\geq 0,\\
v\in E^u_x  \Leftrightarrow \|DF^k_x(v)\|\leq C\lambda^{-k}\|v\|  \textrm{ for all } k\leq 0,\\
v\in T_x\Lambda \Leftrightarrow \|DF^k_x(v)\|\leq C\mu^{|k|}\|v\|
\textrm{ for all } k\in\mathbb{Z}.
\end{split}\]
\end{definition}

There exist stable and unstable manifolds of $\Lambda$, as well as
the stable and unstable manifolds of each point $x\in\Lambda$, that are defined similarly as in the flow case, and they carry analogous properties. The smoothness properties of these invariant objects are analogous of those for a flow, if we set $1\leq \ell<\min\{r, (\log\lambda^{-1})(\log\mu)^{-1}\}$.

\subsection{Normal hyperbolicity relative to the return map}\label{subsection:normalpoincare}

Let  $\Phi:M\times \mathbb{R}\to M$ be a $C^r$-smooth flow defined on
an $m$-dimensional manifold $M$. Denote by  $X$  the vector field
associated to $\Phi$, where $X(x)=\frac{\partial}{\partial t}
\Phi(x,t)_{\mid t=0}$.  As before, assume that $\Lambda\subseteq M$ is
an $l$-dimensional normally hyperbolic invariant manifold for $\Phi$.
The dimensions of $T\Lambda$, $E^u$ and $E^s$ are $l,n,n$,
respectively, with $l+2n=m$.

Let $\Sigma$ be an $(m-1)$-dimensional local surface of section, i.e., $\Sigma$ is a $C^1$ submanifold of $M$ such that  $X(x)\not\in T_x\Sigma$ for all $x\in\Sigma$. Let $\Lambda_\Sigma=\Lambda\cap\Sigma$. Then $\Lambda_\Sigma$ is a $(l-1)$-dimensional submanifold in $\Sigma$, assuming that the intersection is non-empty.

Assume that each forward and backward orbit through a point in $\Lambda_\Sigma$ intersects again $\Lambda_\Sigma$. Since $X(x)\not\in T_x\Sigma$ for all $x\in\Sigma$, then  the intersection of the forward and backward orbits with $\Sigma$ are transverse. Also, $X(x)\not\in T_x\Lambda_\Sigma$ for all $x\in\Lambda_\Sigma$. Additionally,  assume that the function \[\tau:\Lambda_\Sigma\to (0,\infty), \textrm { given by } \tau(x)=\inf\{ t>0\,|\, \Phi(x,\tau(x))\in \Lambda_\Sigma\},\] is a continuous function. Following \cite{Fenichel74}, we will refer to $\Lambda_\Sigma$ with these properties as a thin surface of section.

By the Implicit Function Theorem, $\tau$ can be extended to a $C^1$-smooth function in a neighborhood $U_\Sigma$ of $\Lambda_\Sigma$ in $\Sigma$ such that $\Phi(x,\tau(x))\in\Sigma$ for all $x\in U_\Sigma$. The Poincar\'e first return map to $\Sigma$ is the map $F:U_\Sigma \to \Sigma$ given by $F(x)=\Phi^{\tau(x)}(x)$.

Let $\Lambda_\Sigma ^X\subseteq \Lambda$ be the union of the orbits of the flow through points in $\Lambda_\Sigma$. Since $\Lambda_\Sigma^X$ is a $C^1$-submanifold of $\Lambda$, and is invariant under $\Phi$, then is a normally hyperbolic invariant manifold for the flow $\Phi$.  The theorem below  implies  that the manifold $\Lambda_\Sigma$ is normally hyperbolic for the  return map $F$.
\begin{theorem}{(Fenichel, \cite{Fenichel74})}\label{thm:normalpoincare}
Let $\Lambda_\Sigma $ be a thin surface of section for the vector field
$X$ on $M$. Then $\Lambda_\Sigma$ is normally hyperbolic with respect to
$F$ if and only if $\Lambda_\Sigma^X$ is a normally
hyperbolic invariant manifold with respect to $\Phi$.
\end{theorem}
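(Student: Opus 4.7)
The plan is to exploit the suspension-type relation between the flow $\Phi$ on $\Lambda_\Sigma^X$ and the return map $F$ on $\Lambda_\Sigma$. Since $X(x)\notin T_x\Sigma$ for $x\in \Sigma$, we have decompositions $T_xM=\mathbb{R}X(x)\oplus T_x\Sigma$ for $x\in\Sigma$ and $T_x\Lambda_\Sigma^X=\mathbb{R}X(x)\oplus T_x\Lambda_\Sigma$ for $x\in\Lambda_\Sigma$. The projection $\pi_\Sigma\colon T_xM\to T_x\Sigma$ along $\mathbb{R}X(x)$ will be the main device for transferring splittings back and forth, and compactness of $\Lambda_\Sigma$ together with continuity of $\tau$ will give uniform bounds $0<\tau_{\min}\leq\tau\leq\tau_{\max}<\infty$ as well as a uniform lower bound on the angle between $X$ and $T\Sigma$.

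For the forward direction I would assume $\Lambda_\Sigma^X$ is normally hyperbolic for $\Phi$ with invariant splitting $TM=E^u\oplus E^s\oplus T\Lambda_\Sigma^X$, and define $\tilde E^{u,s}_x:=\pi_\Sigma(E^{u,s}_x)$ for $x\in\Lambda_\Sigma$. These produce the required splitting $T_x\Sigma=\tilde E^u_x\oplus\tilde E^s_x\oplus T_x\Lambda_\Sigma$, since $T_x\Lambda_\Sigma=T_x\Lambda_\Sigma^X\cap T_x\Sigma$ and the $X$-direction is quotiented out. Invariance of $\tilde E^u$ under $dF$ follows by lifting $v\in\tilde E^u_x$ to $u=v+aX(x)\in E^u_x$ and computing
\[ dF_x(v)=d\Phi^{\tau(x)}_x(v)+d\tau_x(v)\,X(F(x))=d\Phi^{\tau(x)}_x(u)+(d\tau_x(v)-a)\,X(F(x)), \]
after which $\pi_\Sigma$ kills the $X$-term and yields $dF_x(v)=\pi_\Sigma(d\Phi^{\tau(x)}_x(u))\in\tilde E^u_{F(x)}$; the same works for $\tilde E^s$, while $dF$ preserves $T\Lambda_\Sigma$ trivially since $F(\Lambda_\Sigma)=\Lambda_\Sigma$. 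The rates transfer because $F^k=\Phi^{T_k}$ with $T_k(x)=\sum_{j=0}^{k-1}\tau(F^j(x))$ satisfying $k\tau_{\min}\leq T_k(x)\leq k\tau_{\max}$, and the norm comparison $\|\pi_\Sigma(u)\|\asymp\|u\|$ is uniform on the compact set $\Lambda_\Sigma$.

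For the converse, starting from an $F$-invariant splitting on $T\Sigma$ over $\Lambda_\Sigma$ with appropriate discrete rates $0<\lambda<\mu^{-1}<1$, I would reconstruct invariant bundles on $\Lambda_\Sigma^X$ by choosing, at each $x\in\Lambda_\Sigma$, the unique lifts $E^{u,s}_x\subset T_xM$ that project onto $\tilde E^{u,s}_x$ under $\pi_\Sigma$ and satisfy the flow-consistency $d\Phi^{\tau(x)}_x(E^{u,s}_x)=E^{u,s}_{F(x)}$ at the return time (this fixes the $X$-components), then extend by $E^{u,s}_{\Phi^t(x)}:=d\Phi^t_x(E^{u,s}_x)$. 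For arbitrary $t\geq 0$ write $t=T_k(x)+s$ with $0\leq s<\tau_{\max}$; the hyperbolic bounds for $dF^k$ produce the correct decay at integer return times, and uniform boundedness of $\|d\Phi^s\|$ for $s\in[0,\tau_{\max}]$ on the compact set $\Lambda_\Sigma^X$ interpolates continuously between them, producing rates $\alpha,\beta$ with $\alpha=|\log\lambda|/\tau_{\max}$ and $\beta=\log\mu/\tau_{\min}$ (up to harmless constants).

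The main obstacle I expect is in this converse direction: verifying that the reconstructed bundles on $\Lambda_\Sigma^X$ are genuinely continuous (indeed $C^{\ell-1}$) and that the exponential estimates are uniform in $t\in\mathbb{R}$, not only along the discrete return times. This requires carefully combining the uniform positive lower bound on $\tau$, the uniform bound on $\|d\Phi^s\|$ for $s$ in a compact interval, and the uniform comparability of norms induced by $\pi_\Sigma$, all of which are consequences of the compactness of $\Lambda_\Sigma$ and the transversality of $X$ to $\Sigma$; once these are in hand, both implications reduce to bookkeeping on the exponential growth rates.
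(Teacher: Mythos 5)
The paper does not actually prove this statement: it quotes it from Fenichel \cite{Fenichel74}, and the only proof-relevant content in the text is the remark immediately after the theorem that the invariant bundles correspond under the projection $\pi:TM=\textrm{span}(X)\oplus T\Sigma\to T\Sigma$. Your construction $\tilde E^{u,s}_x=\pi_\Sigma(E^{u,s}_x)$, the dimension count, and the verification of $dF$-invariance via $dF_x(v)=d\Phi^{\tau(x)}_x(v)+d\tau_x(v)\,X(F(x))$ are exactly that correspondence carried out in detail, so the geometric half of your argument is the intended one and is correct.

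The gap is in the transfer of rates, and it occurs in both directions. You bound the hyperbolic rate using one extreme of the return time and the center rate using the other, e.g.\ $\lambda=e^{-\alpha\tau_{\min}}$, $\mu=e^{\beta\tau_{\max}}$ in the forward direction, and $\alpha=|\log\lambda|/\tau_{\max}$, $\beta=\log\mu/\tau_{\min}$ in the converse. The required inequalities $\lambda\mu<1$ and $\beta<\alpha$ then read $\alpha\tau_{\min}>\beta\tau_{\max}$ and $\tau_{\min}|\log\lambda|>\tau_{\max}\log\mu$, neither of which follows from the hypotheses unless $\tau_{\max}/\tau_{\min}$ happens to be smaller than the spectral gap ratio. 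This is not a ``harmless constant'': it sits in the exponent and decides whether the dominated-splitting condition survives. The missing idea is that the center and hyperbolic estimates must be compared along the \emph{same} orbit, where both involve the same elapsed time $T_k(x)$, so that $e^{\beta T_k(x)}\cdot e^{-\alpha T_k(x)}\to0$ pointwise no matter how $\tau$ varies; Fenichel's generalized Lyapunov-type numbers are ratios of logarithms and therefore invariant under the orbitwise reparametrization $t=T_k(x)$, which is why his proof closes, whereas with the uniform-rate definition used in this paper one must either pass to an adapted metric or argue pointwise before extracting uniform rates. A secondary soft spot: in the converse you assert a ``unique lift'' of $\tilde E^{u,s}_x$ fixed by flow-consistency at return times, but the equation for the $X$-components of the lift is a cohomological equation along each $F$-orbit with many solutions; singling out the correct bounded, continuous one requires the usual dynamical characterization (or a graph-transform/invariant-section argument), not the consistency condition alone.
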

The invariant sub-bundles $T\Lambda$, $E^u$, $E^s$ associated to the normal hyperbolic structure on $\Lambda$ correspond to sub-bundles $T\Lambda_\Sigma $, $E^u_\Sigma $, $E^s_\Sigma $ in  the following way. Let $\pi:TM=\textrm{span}(X)\oplus T\Sigma\to T\Sigma$ be the projection onto $T\Sigma$. Then
$T\Lambda_\Sigma  =\pi(T_\Lambda)$, $E^u_\Sigma=\pi(E^u)$, and $E^s_\Sigma=\pi(E^s)$.

Note that the surface of section described above is only a local surface of section. It is in general very difficult, or even impossible, to obtain a global surface of section for a flow. However, one can obtain global surfaces of section for Hamiltonian flows on $3$-dimensional strictly convex energy surfaces \cite{HWZ98}.

\subsection{Lambda Lemma}\label{sec:lambda}

We describe a Lambda Lemma type of result for normally hyperbolic
invariant manifolds that appears in J.-P. Marco \cite{Marco08}.

By a normal form in a neighborhood $V$ of $\Lambda$ in
$M$ we mean a $C^k$-smooth coordinate system $(c,s,u)$ on $V$ such
that $V$ is diffeomorphic  through $(c,s,u)$ with a product $\Lambda\times \mathbb{R}^n\times \mathbb{R}^n$, where $\Lambda=\{(c,s,u)\,|\, c\in \Lambda,\, u=s=0\}$, and $W^u(x)=\{(c,s,u)\,|\,c=c(x), s=0\}$,
$W^s(x)=\{(c,s,u)\,|\, c=c(x), u=0\}$ for each $x\in\Lambda$ of coordinates $(c(x),0,0)$.

\begin{theorem}[Lambda Lemma]\label{thm:lambdalemma}
Suppose that $\Lambda$ is a normally hyperbolic invariant manifold for $F$ and $(c,s,u)$
is a normal form in a neighborhood of $\Lambda$.
Consider a submanifold $\Delta$ of $M$ of dimension $n$ which intersects the stable manifold
$W^s(\Lambda)$ transversely at some point $z= (c, s, 0)$. Set $F^N(z)=z_N=(c_N, s_N, 0)$ for
$N\in\mathbb{N}$.
Then there exists $\delta>0$ and $N_0>0$ such that for each $N\geq N_0$ the connected component $\Delta_N$ of $F^N(\Delta)$ in the $\delta$-neighborhood $V(\delta)=\Lambda\times B^s_{\delta}(0)\times B^u_{\delta}(0)$ of $\Lambda$ in $M$ admits a graph parametrization  of the form
\[\Delta_N:=\{(C_N(u), S_N(u),u) \,|\, u\in B^u_{\delta}(0)\}\]
such that
\[\|C_N-c_N\|_{C^1(B^u_{\delta}(0))}\to 0, \, \textrm { and } \|S_N\|_{C^1(B^u_{\delta}(0))}\to 0 \textrm { as } N\to\infty.  \]
\end{theorem}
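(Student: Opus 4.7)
The approach is a normally hyperbolic version of the classical inclination (Lambda) lemma. In the normal form coordinates $(c,s,u)$ the map $F$ sends each stable fiber $W^s(x)$ into $W^s(F(x))$ and each unstable fiber into an unstable fiber, so it takes the schematic form $F(c,s,u)=(f(c,s,u),\,A(c,s,u)\,s,\,B(c,s,u)\,u)$ with $f(c,0,0)=F|_\Lambda(c)$, and with $\|A\|\le\lambda+o(1)$, $\|B^{-1}\|\le\lambda+o(1)$ as $(s,u)\to 0$. Since $\dim\Delta=n=\dim E^u$ and $\Delta$ meets $W^s(\Lambda)=\{u=0\}$ transversely at $z$, the projection $T_z\Delta\to E^u_z$ is an isomorphism. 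Hence the implicit function theorem furnishes an initial local parametrization $\Delta\cap U=\{(C_0(u),S_0(u),u):u\in B^u_\eps(0)\}$ with $C_0(0)=c$, $S_0(0)=s$, which will serve as the starting point for the iteration.

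Next I would introduce an unstable cone field
\[
K^u_\eta(x)=\bigl\{v^c+v^s+v^u\in T_xM : \|v^c\|+\|v^s\|\le\eta\,\|v^u\|\bigr\}
\]
and exploit the spectral gap $\lambda<\mu^{-1}$ to establish, in a neighborhood of $\Lambda$, the standard invariance $DF(K^u_\eta(x))\subset K^u_{\rho\eta}(F(x))$ with some $\rho\in(0,1)$, together with the lower expansion bound $\|DF(v)\|\ge\lambda^{-1}(1-O(\eta))\|v\|$ for $v\in K^u_\eta$. Because $T_z\Delta\subset K^u_{\eta_0}$ by transversality and continuity of $E^u$, iteration forces $T_q F^k(\Delta)\subset K^u_{\rho^k\eta_0}$ for every $q\in F^k(\Delta)$ that stays in the cone domain. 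Translating this into coordinates, the derivatives of the graph parametrization obey $\|DC_N\|,\,\|DS_N\|\le \rho^N\eta_0\to 0$ uniformly on $B^u_\delta(0)$. That $\Delta_N$ is a graph over the \emph{full} ball $B^u_\delta(0)$ follows because the $u$-component expands at rate $\sim\lambda^{-N}$, so for $N\ge N_0$ the image of a suitably chosen initial sub-disk covers $B^u_\delta(0)$, while the cone invariance prevents the $u$-projection from degenerating.

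For the $C^0$ part, the orbit point $F^N(z)=z_N=(c_N,s_N,0)$ lies on $\Delta_N$ at the parameter value $u=0$, so $C_N(0)=c_N$ and $S_N(0)=s_N$. Since $z\in W^s(\Lambda)$ we have $s_N\to 0$, and combining this with the derivative bound yields
\[
\|C_N-c_N\|_{C^0(B^u_\delta(0))}\le \delta\,\rho^N\eta_0, \qquad \|S_N\|_{C^0(B^u_\delta(0))}\le \|s_N\|+\delta\,\rho^N\eta_0,
\]
both of which tend to zero and, together with the derivative estimate, give the claimed $C^1$ convergence. The main obstacle is the cone invariance step: unlike the classical inclination lemma at a hyperbolic fixed point, the center dynamics on $\Lambda$ is not trivial and can grow at rate up to $\mu$, so one must use the strict gap $\mu<\lambda^{-1}$ to dominate the center-to-unstable coupling terms of $DF$ and obtain a uniform contraction rate $\rho<1$ on the cones throughout the portion of the orbit that remains in the normal form neighborhood; a secondary technical point is verifying that the connected component of $F^N(\Delta)\cap V(\delta)$ indeed coincides with the constructed graph, which follows from the unstable expansion separating different branches.
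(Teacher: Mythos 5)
The paper does not prove Theorem~\ref{thm:lambdalemma}: it is quoted from J.-P.~Marco's preprint \cite{Marco08} and used as a black box, so there is no in-paper argument to compare yours against. On its own terms, your plan is the standard graph-transform/cone-field proof of the inclination lemma in the normally hyperbolic setting, and its overall architecture (initial graph over $E^u$ from transversality, invariant unstable cones via the spectral gap $\lambda\mu<1$, expansion of the $u$-component to cover the full ball $B^u_\delta(0)$, evaluation at $u=0$ to pin down $C_N(0)=c_N$, $S_N(0)=s_N$) is correct and is essentially how such lemmas are proved.

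One estimate is stated too optimistically. The normal form of Section~\ref{sec:lambda} only straightens the invariant fibers; it does not make the $u$-coordinate subspace $DF$-invariant off $W^u(\Lambda)$. Writing $DF=\left(\begin{smallmatrix}P&Q\\ R&T\end{smallmatrix}\right)$ in the $(cs,u)$ block splitting, the off-diagonal block $Q$ vanishes on $W^u(\Lambda)=\{s=0\}$ but not elsewhere, so the cone-opening recursion is $\eta_{k+1}\le\rho\,\eta_k+C\epsilon_k$ with an additive term $\epsilon_k$ controlled by the $s$-coordinate of the orbit point, not the clean geometric decay $\eta_N\le\rho^N\eta_0$ you assert. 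The fix is the usual bootstrap: first prove the $C^0$ statement for the $s$-component (points of $\Delta_N$ that remain in $V(\delta)$ have $s$-coordinates $O((\lambda+\epsilon)^{N})$ by the fiber contraction), conclude $\epsilon_k\to0$, and only then deduce $\eta_k\to0$ and hence the $C^1$ convergence. A second, minor point: the initial tangent plane $T_z\Delta$ lies in a cone $K^u_{\eta_0}$ with $\eta_0$ possibly large, so you must either shrink the neighborhood as a function of $\eta_0$ or run finitely many preliminary iterates (using that $z_N\to\Lambda$) before the uniform cone contraction applies. Neither issue changes the outcome, but both need to be addressed for a complete proof.
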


\section{Scattering map} \label{sec:scatteringmap}

In this section we review the scattering map associated to a normally hyperbolic invariant manifold for a flow or for a map, and discuss the relationship between the scattering map for a flow and the scattering map for the corresponding return map to some surface of section.

\subsection{Scattering map for continuous and discrete dynamical systems} Consider a flow $\Phi: M\times \mathbb{R}\to M$ defined on a manifold $M$ that possesses a normally hyperbolic invariant manifold $\Lambda\subseteq M$.

As the stable and unstable manifolds of $\Lambda$ are foliated by stable and unstable manifolds of points, respectively, for each $x\in W^u(\Lambda)$ there exists a unique $x_-\in\Lambda$ such that
$x\in W^u(x_-)$, and for each   $x\in W^s(\Lambda)$ there exists
a unique $x_+\in\Lambda$ such that $x\in W^s(x_+)$.
We define the  wave maps  $\Omega_{+}:W^s(\Lambda)\to \Lambda$ by
$\Omega_{+}(x)=x_{+}$, and $\Omega_{-}:W^u(\Lambda)\to \Lambda$  by
$\Omega_{-}(x)=x_{-}$. The maps $\Omega _+$ and $\Omega _-$ are
$C^{\ell}$-smooth.

We now describe the scattering  map, following \cite{DelshamsLS08a}.  Assume that $W^u(\Lambda)$ has a transverse intersection with $W^s(\Lambda)$ along a $l$-dimensional homoclinic manifold $\Gamma$. The manifold $\Gamma$ consists of a $(l-1)$-dimensional family of trajectories asymptotic to $\Lambda$ in both forward and backwards time.
The transverse intersection of the hyperbolic invariant manifolds along $\Gamma$ means that  $\Gamma\subseteq W^u(\Lambda) \cap W^s(\Lambda)$
and, for each $x\in\Gamma$, we have
\begin{equation} \begin{split}\label{eq:dynamical channel}
T_xM=T_xW^u(\Lambda)+T_xW^s(\Lambda),\\
T_x\Gamma=T_xW^u(\Lambda)\cap T_xW^s(\Lambda).
\end{split} \end{equation}
Let us assume the additional condition that for each $x\in\Gamma$ we
have
\begin{equation} \label{eq:transverse foliation}
\begin{split}
T_xW^s(\Lambda)=T_xW^s(x_+)\oplus T_x(\Gamma),\\
T_xW^u(\Lambda)=T_xW^u(x_-)\oplus T_x(\Gamma),
\end{split}
\end{equation}
where $x_-,x_+$ are the uniquely defined points in $\Lambda$
corresponding to $x$.

The restrictions $\Omega_+^\Gamma,\Omega_-^\Gamma$ of 
$\Omega_+,\Omega_-$, respectively, to $\Gamma$ are  \emph{local}
$C^{\ell-1}$ - diffeomorphisms.  By restricting $\Gamma$ even further,  if necessary,  we can ensure that
$\Omega_+^\Gamma,\Omega_-^\Gamma$ are $C^{\ell-1}$-diffeomorphisms.
A homoclinic manifold $\Gamma$ for which the corresponding
restrictions of the wave maps are $C^{\ell-1}$-diffeomorphisms will
be referred as a homoclinic channel.
\begin{definition} \label{defn:scattering_map_flow}
Given a homoclinic channel $\Gamma$, the scattering map associated to
$\Gamma$ is the $C^{\ell-1}$-diffeomorphism
$S^\Gamma=\Omega^\Gamma_+\circ (\Omega^\Gamma_-)^{-1}$ defined on
the open subset $U_-:=\Omega^\Gamma_-(\Gamma)$ in $\Lambda$ to the
open subset $U_+:=\Omega^\Gamma_+(\Gamma)$ in $\Lambda$.
\end{definition}

See Figure \ref{fig:scattering2}. In the sequel we will regard $S^\Gamma$ as a partially defined map, so the
image of a set $A$ by $S^\Gamma$ is $S^\Gamma(A\cap U_-)$.
\begin{figure}
\centering
\includegraphics[width=0.5\textwidth]{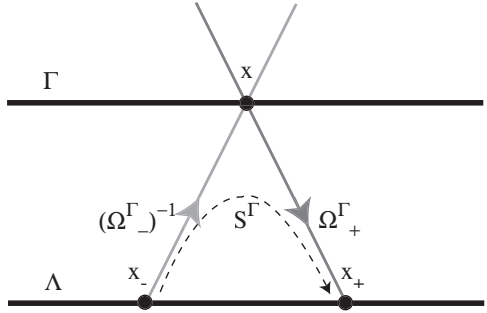}
\caption{Scattering map.}
\label{fig:scattering2}
\end{figure}

If we flow $\Gamma$ backwards and forward in time we obtain
the manifolds $\Phi^{-t_u}(\Gamma)$ and
$\Phi^{t_s}(\Gamma)$ that  are also homoclinic channels, where
 $t_u,t_s>0$. The associated wave maps
are $\Omega_+^{\Phi^{-t_u}(\Gamma)},\Omega_-^{\Phi^{-t_u}(\Gamma)}$, and $\Omega_+^{\Phi^{t_s}(\Gamma)},\Omega_-^{\Phi^{t_s}(\Gamma)}$,
respectively. The
scattering map can be expressed with respect to these wave maps as
\begin{equation}\label{eqn:scatteringexplicit flow}
S^\Gamma=\Phi^{-t_s}\circ (\Omega^{\Phi^{t_s}(\Gamma)}_+)\circ \Phi^{t_s+t_u}\circ
(\Omega ^{\Phi^{-t_u}(\Gamma)}_-)^{-1}\circ \Phi^{-t_u}.
\end{equation}

We recall below some important properties of the scattering map.

\begin{proposition}\label{prop:symplectic}Assume that $\dim M=2n+l$ is even (i.e., $l$ is even) and $M$ is endowed with a symplectic (respectively exact symplectic) form $\omega$ and that $\omega_{\mid \Lambda}$  is also symplectic.
Assume that $\Phi^t$ is symplectic (respectively exact symplectic).
Then, the scattering map $S^\Gamma$ is symplectic (respectively exact symplectic).\end{proposition}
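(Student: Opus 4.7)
The plan is to prove the symplectic statement by establishing a single pointwise identity that links $\omega$ evaluated on vectors in $T_{x_-}\Lambda$, on their lifts in $T_z\Gamma$, and on their images in $T_{x_+}\Lambda$; then to extend to the exact case by exhibiting an explicit primitive as an improper integral along the homoclinic orbit.

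Fix a point $z \in \Gamma$ with feet $x_- = \Omega_-^\Gamma(z)$ and $x_+ = \Omega_+^\Gamma(z) = S^\Gamma(x_-)$, and let $v_1, v_2 \in T_{x_-}\Lambda$. Under the transversality condition (\ref{eq:transverse foliation}), the vectors lift uniquely through $(D\Omega_-^\Gamma)^{-1}$ to $V_1, V_2 \in T_z\Gamma \subset T_zW^u(\Lambda) \cap T_zW^s(\Lambda)$, and by definition $v_+^i := DS^\Gamma(x_-) v_i = D\Omega_+^\Gamma(z) V_i$. It therefore suffices to prove
\begin{equation*}
\omega_{x_-}(v_1, v_2) = \omega_z(V_1, V_2) = \omega_{x_+}(v_+^1, v_+^2).
\end{equation*}

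For the left equality, flow backwards: since $\Phi^{-t}$ is symplectic,
\begin{equation*}
\omega_z(V_1, V_2) = \omega_{\Phi^{-t}(z)}\bigl(D\Phi^{-t}(z) V_1,\, D\Phi^{-t}(z) V_2\bigr) \qquad \text{for all } t \geq 0.
\end{equation*}
In the normal form coordinates of Section \ref{sec:lambda}, decompose $V_i = h_i + k_i$ with $k_i \in T_z W^u(x_-) = \ker D\Omega_-(z)$ and $h_i$ the horizontal lift of $v_i$; under $D\Phi^{-t}$ the piece $k_i$ contracts at rate $e^{-\alpha t}$, the central vector $D\Phi^{-t}(x_-) v_i$ grows at most like $e^{\beta t}$, and $\Phi^{-t}(z)$ converges to $\Phi^{-t}(x_-)$ at rate $e^{-\alpha t}$. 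Combining these estimates I would show
\begin{equation*}
\omega_{\Phi^{-t}(z)}\bigl(D\Phi^{-t} V_1, D\Phi^{-t} V_2\bigr) - \omega_{\Phi^{-t}(x_-)}\bigl(D\Phi^{-t} v_1, D\Phi^{-t} v_2\bigr) = O\bigl(e^{-(\alpha-2\beta)t}\bigr).
\end{equation*}
The second term equals $\omega_{x_-}(v_1, v_2)$ by the symplectic invariance of $\Phi^{-t}$ on $\Lambda$, and the left-hand side is $t$-independent, so letting $t \to \infty$ forces $\omega_z(V_1, V_2) = \omega_{x_-}(v_1, v_2)$. The mirror argument, using $z \in W^s(x_+)$ and $V_i \in T_z W^s(\Lambda)$ and flowing forward in time, gives $\omega_z(V_1, V_2) = \omega_{x_+}(v_+^1, v_+^2)$, completing the symplectic case.

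The main obstacle will be justifying the $O(e^{-(\alpha-2\beta)t})$ bound uniformly along the orbit, which requires carefully controlling the difference $D\Phi^{-t}(z) V_i - D\Phi^{-t}(x_-) v_i$ after parallel transport between nearby fibres of $TM$ and using H\"older continuity of $\omega$ in the base point; this is where the spectral gap inherent in normal hyperbolicity and the normal-form structure of Theorem \ref{thm:lambdalemma} are essential. For the exact symplectic case, write $\omega = d\lambda$ and $(\Phi^t)^*\lambda - \lambda = dP_t$; then I would exhibit a primitive $W$ for $(S^\Gamma)^*\lambda - \lambda$ via the improper integral
\begin{equation*}
W(x_-) = \int_{0}^{+\infty}\!\bigl[\lambda(X)(\Phi^{t} z) - \lambda(X)(\Phi^{t} x_+)\bigr] dt + \int_{-\infty}^{0}\!\bigl[\lambda(X)(\Phi^{t} z) - \lambda(X)(\Phi^{t} x_-)\bigr] dt,
\end{equation*}
whose integrand decays exponentially thanks to $z$ lying on the stable leaf of $x_+$ and the unstable leaf of $x_-$. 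Verifying $dW = (S^\Gamma)^*\lambda - \lambda$ is then a differentiation-under-the-integral computation that reduces back to the closed identity established in the symplectic case.
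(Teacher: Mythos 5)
The paper itself gives no proof of Proposition \ref{prop:symplectic}; it is quoted as a known property of the scattering map, established in \cite{DelshamsLS08a}. Your plan reconstructs essentially that reference proof: reduce to the pointwise identity $(\Omega_\pm^\Gamma)^*(\omega_{\mid \Lambda})=\omega_{\mid \Gamma}$, obtain it by comparing the $t$-independent quantities $\omega(D\Phi^{\mp t}V_1,D\Phi^{\mp t}V_2)$ and $\omega(D\Phi^{\mp t}v_1,D\Phi^{\mp t}v_2)$ as $t\to\infty$, and, in the exact case, write the primitive of $(S^\Gamma)^*\lambda-\lambda$ as a convergent improper integral along the homoclinic orbit (this is exactly the formula that, in the perturbative setting, produces the Melnikov potential). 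Two points deserve attention. First, your bound $O(e^{-(\alpha-2\beta)t})$ genuinely needs the spectral gap $\alpha>2\beta$: the worst term is the pairing of the two central components weighted by the $O(e^{-\alpha t})$ separation of the base points, and it does not vanish under bare normal hyperbolicity. This is covered by the paper's standing hypothesis that $\ell\ge 2$, i.e. $\alpha/\beta>2$, but you should invoke it explicitly; alternatively, the gap condition can be avoided altogether by the symplectic-reduction variant of the same limit argument, in which one only pairs a fibre vector against an arbitrary vector (needing just $\alpha>\beta$) to conclude that $W^{s}(\Lambda)$ and $W^{u}(\Lambda)$ are coisotropic with the stable/unstable fibres as characteristic leaves, whence the closed form $\omega$ is basic and descends along the wave maps. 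Second, in the exact case the correct integrand for an exact symplectic (e.g.\ Hamiltonian) flow is $i_X\lambda-H$ rather than $\lambda(X)$; the $H$-terms cancel in your differences only because $z$, $x_-$ and $x_+$ lie on the same level set of $H$, which should be stated. These are fixable details rather than gaps; the strategy is correct and coincides with the one the paper implicitly relies on.
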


\begin{proposition}\label{prop:transversal}Assume that $T_1$ and $T_2$ are two invariant  submanifolds of complementary dimensions in $\Lambda$. Then  $W^u(T_1)$ has a transverse intersection with $W^s(T_2)$ in $M$ if and only if $S(T_1)$ has a transverse intersection with $T_2$ in $\Lambda$.\end{proposition}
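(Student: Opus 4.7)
The plan is to set up a bijective correspondence, via the wave maps $\Omega^\Gamma_\pm$, between intersection points of $W^u(T_1)$ with $W^s(T_2)$ lying on the homoclinic channel $\Gamma$ and intersection points of $S(T_1)$ with $T_2$ in $\Lambda$, and then show that this correspondence preserves transversality. First I would note that any $p\in W^u(T_1)\cap W^s(T_2)\cap \Gamma$ satisfies $p_-:=\Omega^\Gamma_-(p)\in T_1$ and $p_+:=\Omega^\Gamma_+(p)\in T_2$, so $S(p_-)=p_+\in S(T_1)\cap T_2$; conversely, any $q\in S(T_1)\cap T_2$ pulls back via $(\Omega^\Gamma_+)^{-1}$ to a point of $\Gamma$ lying in $W^u(T_1)\cap W^s(T_2)$.

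Next, I would compute the relevant tangent spaces at such a point $p$. Since $W^u(T_1)=\Omega_-^{-1}(T_1)$ and $\Omega_-$ is a submersion on $W^u(\Lambda)$ with kernel $T_pW^u(p_-)$, one obtains
\[
T_pW^u(T_1)=T_pW^u(p_-)\oplus L_1,\qquad L_1:=(d\Omega^\Gamma_-|_p)^{-1}(T_{p_-}T_1)\subset T_p\Gamma,
\]
and symmetrically $T_pW^s(T_2)=T_pW^s(p_+)\oplus L_2$ with $L_2:=(d\Omega^\Gamma_+|_p)^{-1}(T_{p_+}T_2)\subset T_p\Gamma$. The hypotheses \eqref{eq:dynamical channel} and \eqref{eq:transverse foliation}, together with dimension counting ($n+n+l=\dim M$), force the direct sum decomposition
\[
T_pM=T_pW^u(p_-)\oplus T_pW^s(p_+)\oplus T_p\Gamma.
\]
Using this splitting, transversality $T_pW^u(T_1)+T_pW^s(T_2)=T_pM$ reduces to the condition $L_1+L_2=T_p\Gamma$, which by the complementary dimension hypothesis $\dim T_1+\dim T_2=\dim \Lambda$ is in turn equivalent to $L_1\cap L_2=\{0\}$.

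Then I would transport this condition to $\Lambda$ via the diffeomorphism $\Omega^\Gamma_-|_\Gamma\colon \Gamma\to U_-$. Under $d\Omega^\Gamma_-|_p$, the subspace $L_1$ maps isomorphically onto $T_{p_-}T_1$, while $L_2$ maps onto $d\Omega^\Gamma_-\circ(d\Omega^\Gamma_+|_p)^{-1}(T_{p_+}T_2)=(dS_{p_-})^{-1}(T_{p_+}T_2)$ by the formula $S=\Omega^\Gamma_+\circ(\Omega^\Gamma_-)^{-1}$. Hence $L_1\cap L_2=\{0\}$ is equivalent to $T_{p_-}T_1\cap (dS_{p_-})^{-1}(T_{p_+}T_2)=\{0\}$, and applying the isomorphism $dS_{p_-}$ this rewrites as $T_{p_+}S(T_1)\cap T_{p_+}T_2=\{0\}$, i.e. the transversality of $S(T_1)$ with $T_2$ in $\Lambda$ at $p_+$.

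The main obstacle I expect is bookkeeping: one must be careful that $\Omega^\Gamma_\pm$ are diffeomorphisms only when restricted to $\Gamma$, so the two presentations $T_pW^u(T_1)=(d\Omega_-|_p)^{-1}(T_{p_-}T_1)$ and $T_pW^u(T_1)=T_pW^u(p_-)\oplus L_1$ must be reconciled, and the subspaces $L_1,L_2$ inside $T_p\Gamma$ must be handled using the graph-of-$S$ viewpoint for $(\Omega^\Gamma_-,\Omega^\Gamma_+)(\Gamma)\subset U_-\times U_+$. A minor point to address is that, since the scattering map depends on the choice of channel $\Gamma$, the equivalence is between transverse intersections along $\Gamma$ and transverse intersections of $S(T_1),T_2$ in $\Omega_+^\Gamma(\Gamma)$; any transverse intersection elsewhere is either captured by a different channel or lies outside the domain of $S^\Gamma$.
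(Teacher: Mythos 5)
Your argument is correct: the paper itself states Proposition \ref{prop:transversal} without proof (it is recalled from \cite{DelshamsLS08a}), and your reduction via the splitting $T_pM=T_pW^u(p_-)\oplus T_pW^s(p_+)\oplus T_p\Gamma$ coming from \eqref{eq:dynamical channel} and \eqref{eq:transverse foliation}, followed by transporting $L_1,L_2$ to $\Lambda$ through $d\Omega^\Gamma_-$, is essentially the standard argument given there. Your closing caveat is also the right reading of the statement: the equivalence is between intersections of $W^u(T_1)$ and $W^s(T_2)$ along $\Gamma$ and intersections of $S^\Gamma(T_1)$ with $T_2$ inside $\Omega_+^\Gamma(\Gamma)$.
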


In the case of  a discrete dynamical system consisting of a diffeomorphism $F:M\to M$ defined on a manifold $M$, the scattering map is defined in a similar way. We assume that $F$ has a normally hyperbolic invariant manifold $\Lambda\subseteq M$.
The wave maps are defined by $\Omega_{+}:W^s(\Lambda)\to \Lambda$ with
$\Omega_{+}(x)=x_{+}$, and $\Omega_{-}:W^u(\Lambda)\to \Lambda$ with
$\Omega_{-}(x)=x_{-}$.

Assume that $W^u(\Lambda)$ and
$W^s(\Lambda)$ have a differentiably transverse intersection along a
homoclinic $l$-dimensional $C^{\ell-1}$-smooth manifold $\Gamma$.
We also assume the transverse foliation condition
\eqref{eq:transverse foliation}.

A homoclinic manifold $\Gamma$ for which the corresponding
restrictions of the wave maps are $C^{\ell-1}$-diffeomorphisms is referred as a homoclinic channel.
\begin{definition} \label{defn:scattering_map_map}
Given a homoclinic channel $\Gamma$, the scattering map associated to
$\Gamma$ is the $C^{\ell-1}$-diffeomorphism
$S^\Gamma=\Omega^\Gamma_+\circ (\Omega^\Gamma_-)^{-1}$ defined on
the open subset $U_-:=\Omega^\Gamma_-(\Gamma)$ in $\Lambda$ to the
open subset $U_+:=\Omega^\Gamma_+(\Gamma)$ in $\Lambda$.
\end{definition}

Note that for $M,N>0$, the manifolds $F^{-M}(\Gamma)$ and
$F^N(\Gamma)$ are also homoclinic channels. The associated wave maps
are $\Omega_-^{F^{-M}(\Gamma)},\Omega_+^{F^{-M}(\Gamma)}$, and $\Omega_-^{F^{N}(\Gamma)},\Omega_+^{F^{N}(\Gamma)}$. The
scattering map can be expressed with respect to these wave map as
\begin{equation}\label{eqn:scatteringexplicit}
S^\Gamma=F^{-N}\circ (\Omega^{F^N(\Gamma)}_+)\circ F^{M+N}\circ
(\Omega ^{F^{-M}(\Gamma)}_-)^{-1}\circ F^{-M}.
\end{equation}

The scattering map for the discrete case satisfies symplectic and transversality properties similar to those in Proposition \ref{prop:symplectic} and Proposition \ref{prop:transversal} for the continuous case.

\subsection{Scattering map for the return map}
\label{subsection:scatteringreturn}

Let  $\Phi:M\times \mathbb{R}\to M$ be a $C^r$-smooth flow defined on
an $m$-dimensional manifold $M$, and   $X$   be the vector field
associated to $\Phi$. Let $\Lambda\subseteq M$ be an $l$-dimensional
normally hyperbolic invariant manifold for $\Phi$. Assume that
$\Sigma$ is a local surface of section and
$\Lambda_\Sigma=\Lambda\cap\Sigma$ satisfies the conditions in
Subsection \ref{subsection:normalpoincare}.

Consider $\Gamma$ a homoclinic channel for $\Phi$. First, we assume that $\Gamma$ has a non-empty intersection with $\Sigma$. Note that $\Gamma$ is a $(l-1)$-parameter family of orbits; we further assume that each trajectory intersects $\Sigma$  transversally.  Since $\Gamma$ is a homoclinic channel, each orbit intersects $\Sigma$ exactly once. Let $\Gamma_\Sigma=\Gamma\cap \Sigma$. It is easy to see that $\Gamma_\Sigma$ is a homoclinic channel for $F$. Thus, we have a scattering map $S^\Gamma$ for $\Gamma$ associated to the flow $\Phi$, and we also have a scattering map $S^{\Gamma_\Sigma}$ for $\Gamma_\Sigma$ associated to the map $F$.

We want to understand the relationship between $S^\Gamma$ and  $S^{\Gamma_\Sigma}$. Associated to the homoclinic channels $\Gamma$ and $\Gamma_\Sigma$ there exist  wave maps $\Omega_\pm^{\Gamma}: \Gamma\to\Lambda$ and $\Omega_\pm^{\Gamma_\Sigma}: \Gamma_\Sigma\to\Lambda_\Sigma$, respectively. These maps are diffeomorphisms. Let $x\in\Gamma_\Sigma$, and let $x_-=\Omega_-^{\Gamma}(x)$, $x_+=\Omega_+^{\Gamma}(x)$, and $\hat x_-=\Omega_-^{\Gamma_\Sigma}(x)$, $\hat x_+=\Omega_+^{\Gamma_\Sigma}(x)$. We have $S^\Gamma(x_-)=x_+$ and   $S^{\Gamma_\Sigma}(\hat x_-)=\hat x_+$.

We want to relate $x_-$ with $\hat x_-$, and $x_+$ with $\hat x_+$.
These points are all in $\Lambda$. It is clear that $\hat
x_-=\Omega^{\Gamma_\Sigma}_-\circ (\Omega^{\Gamma}_-)^{-1}(x_-)$, and
$\hat
x_+=\Omega^{\Gamma_\Sigma}_+\circ
(\Omega^{\Gamma}_+)^{-1}(x_+)$. Denote by
$P^{\Gamma}_-:\Omega^{\Gamma}_-(\Gamma)\to \Omega^{\Gamma_\Sigma}_-$
the map given by $P^{\Gamma}_-=\Omega^{\Gamma_\Sigma}_-\circ
(\Omega^{\Gamma}_-)^{-1}$, and denote by
$P^{\Gamma}_+:\Omega^{\Gamma}_+(\Gamma)\to \Omega^{\Gamma_\Sigma}_+$
the map given by $P^{\Gamma}_+=\Omega^{\Gamma_\Sigma}_+\circ
(\Omega^{\Gamma}_+)^{-1}$. We want to express these maps in terms of
the dynamics restricted to $\Lambda$.

Let  $V$  be a flow box at  $\hat x_-$ (for definition see \cite{Robinson1999}). This means each trajectory through a point $y\in V$ intersects $\Sigma$ exactly once. Then there exists a differentiable function $\hat \tau:V\to\mathbb{R}$ defined by $\tau(z)=0$ if $z\in\Sigma$ and $\Phi^{\hat\tau(y)} (y)\in\Sigma$  for each $y\in V$.
The function $\hat\tau$ can be extended in a unique way on each trajectory passing though $V$.
Due to the relationship between the invariant bundles for the flow and the invariant bundles for the map described in  Subsection \ref{subsection:normalpoincare}, the fiber $E^u_{\Sigma}(\hat x_-)$  is the projection onto $T\Sigma$ of the image of the fiber $E^u(x_-)$ under $D\Phi^{\hat\tau(x_-)}_{x_-}$. This means that $\Phi^{\hat\tau (x_-)}(x_-)=\hat x_-$. In other words, $\hat x_-$ is at the intersection of the trajectory through $x_-$ with $\Sigma$. Thus, the projection $P^{\Gamma}_-$ that takes $x_-$ to $\hat x_-$ is given by $P^{\Gamma}_-(x_-)=\Phi^{\hat\tau (x_-)}(x_-)$. This projection map is invertible. If $\hat y_-$ is a point in $\Omega^{\Gamma_\Sigma}_-$,  there exists a unique point $y_-\in \Omega^{\Gamma}_-(\Gamma)$ such that $\Phi^{\hat\tau (y_-)}(y_-)=\hat y_-$. If there exist two such points, $y_-$ and $y'_-$, to them they correspond two points $y$, $y'$ in $\Gamma_\Sigma$ such that $y\in W^u_{F}(y_-)$ and $y'\in W^u_{F}(y'_-)$. The points $y,y'$ should belong to the same unstable fiber $W^u_{F}(\hat y_-)$. Then it means that $y$, $y'$ are on the same trajectory. As they are also in $\Gamma$ and $\Gamma$ is a homoclinic channel, than $y=y'$ and $y_-=y'_-$. In summary, the projection map $P^{\Gamma}_-:\Omega^{\Gamma}_-(\Gamma)\to \Omega^{\Gamma_\Sigma}_-$ is given by $P^{\Gamma}_-(x_-)=\Phi^{\tau(x_-)}(x_-)$.
Similarly, the projection map $P^{\Gamma}_+:\Omega^{\Gamma}_+(\Gamma)\to \Omega^{\Gamma_\Sigma}_+$ is given by $P^{\Gamma}_+(x_+)=\Phi^{\tau(x_+)}(x_+)$. See Figure \ref{fig:scattering-returnmap}.

\begin{figure}
\centering
\includegraphics[width=0.5\textwidth]{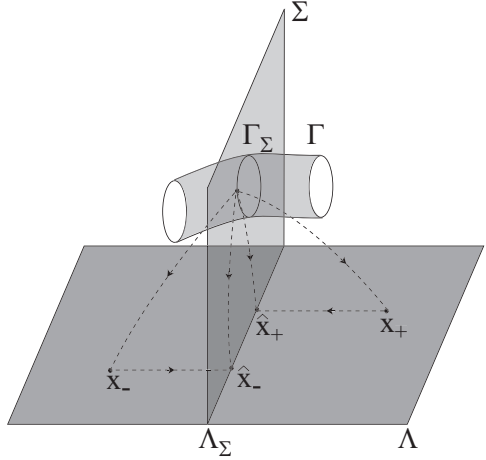}
\caption{Scattering map for the return map.}
\label{fig:scattering-returnmap}
\end{figure}

Now we can formulate the relationship between the scattering map $S^\Gamma$  associated to the flow $\Phi$, and the scattering map $S^{\Gamma_\Sigma}$ associated to the map $F$.
\begin{proposition}\label{prop:scatteringmapflow1} Assume that $\Gamma$ is a homoclinic channel for the flow $\Phi$, and $\Gamma_\Sigma=\Gamma\cap \Sigma$ is the corresponding homoclinic channel for the map $F$. Let $S^\Gamma$ be the scattering map corresponding to $\Gamma$, and let $S^{\Gamma_\Sigma}$ be the scattering map  corresponding to $\Gamma_\Sigma$. Then:
\begin{equation}
S^{\Gamma_\Sigma}=P_+^\Gamma\circ S^{\Gamma}\circ (P_-^\Gamma)^{-1}.
\end{equation}
\end{proposition}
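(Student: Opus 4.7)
The identity is essentially a formal cancellation: all four maps $S^{\Gamma_\Sigma}$, $S^\Gamma$, $P^\Gamma_-$, $P^\Gamma_+$ are defined as compositions of wave maps and their inverses, so the plan is simply to unfold the definitions and observe that the composition telescopes.

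Concretely, from $P^\Gamma_- = \Omega^{\Gamma_\Sigma}_- \circ (\Omega^\Gamma_-)^{-1}$ one reads off $(P^\Gamma_-)^{-1} = \Omega^\Gamma_- \circ (\Omega^{\Gamma_\Sigma}_-)^{-1}$. Substituting this together with the definitions of $S^\Gamma$ and $P^\Gamma_+$ yields
\begin{align*}
P_+^\Gamma\circ S^\Gamma\circ (P_-^\Gamma)^{-1}
&= \bigl[\Omega^{\Gamma_\Sigma}_+\circ (\Omega^\Gamma_+)^{-1}\bigr]\circ \bigl[\Omega^\Gamma_+\circ (\Omega^\Gamma_-)^{-1}\bigr]\circ \bigl[\Omega^\Gamma_-\circ (\Omega^{\Gamma_\Sigma}_-)^{-1}\bigr]\\
&= \Omega^{\Gamma_\Sigma}_+\circ (\Omega^{\Gamma_\Sigma}_-)^{-1}\;=\;S^{\Gamma_\Sigma},
\end{align*}
once the two adjacent inverse pairs $(\Omega^\Gamma_+)^{-1}\circ \Omega^\Gamma_+$ and $(\Omega^\Gamma_-)^{-1}\circ \Omega^\Gamma_-$ cancel.

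The only subtlety, and what I expect to be the main obstacle, is the domain bookkeeping: one must verify that the consecutive maps are actually composable and that the cancellations take place on genuinely overlapping open sets, not merely formally. This is where the geometric discussion preceding the proposition is invoked. The flow-time description $P^\Gamma_\pm(x_\pm) = \Phi^{\hat\tau(x_\pm)}(x_\pm)$ together with the standing assumptions that $\Gamma$ is a homoclinic channel for $\Phi$ and that each trajectory in $\Gamma$ crosses $\Sigma$ transversally exactly once, guarantee that $\Gamma_\Sigma=\Gamma\cap\Sigma$ is a homoclinic channel for $F$ (so that $S^{\Gamma_\Sigma}$ is defined) and that $P^\Gamma_\pm$ are diffeomorphisms from $\Omega^\Gamma_\pm(\Gamma)\subseteq\Lambda$ onto $\Omega^{\Gamma_\Sigma}_\pm(\Gamma_\Sigma)\subseteq\Lambda_\Sigma$. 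With these identifications, the images of each factor lie in the domain of the next, so the cancellations above are legitimate and the claimed identity holds as an equality of maps $\Omega^{\Gamma_\Sigma}_-(\Gamma_\Sigma)\to \Omega^{\Gamma_\Sigma}_+(\Gamma_\Sigma)$.
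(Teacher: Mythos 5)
Your proof is correct and takes essentially the same route as the paper: the paper's own argument is exactly this telescoping of wave maps, written pointwise as $S^{\Gamma_\Sigma}(\hat x_-)=P^{\Gamma}_+(x_+)=P^{\Gamma}_+\circ S^{\Gamma}(x_-)=P^{\Gamma}_+\circ S^{\Gamma}\circ (P^{\Gamma}_-)^{-1}(\hat x_-)$ for $x\in\Gamma_\Sigma$. Your remarks on domain bookkeeping (that the identity is to be read on $\Omega^{\Gamma_\Sigma}_-(\Gamma_\Sigma)$, using that each orbit of $\Gamma$ meets $\Sigma$ exactly once) are, if anything, more careful than the paper's.
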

\begin{proof} We have that $S^{\Gamma}(x_-)=x_+$, $S^{\Gamma_\Sigma}(\hat x_-)=\hat x_+$, $P^{\Gamma}_-(x_-)=\hat x_-$, and $P^{\Gamma}_-(x_+)=\hat x_+$. Thus $S^{\Gamma_\Sigma}(\hat x_-)=P^{\Gamma}_+(x_+)=P^{\Gamma}_+\circ S^{\Gamma}(x_-)=P^{\Gamma}_+\circ S^{\Gamma}\circ (P^{\Gamma}_-)^{-1}(\hat x_-)$.
\end{proof}

\section{Transition map}\label{sec:transition}

The scattering map for a flow $\Phi$ is geometrically defined:   $S^\Gamma(x_-)=x_+$ means that $W^u(x_-)$ intersects $W^s(x_+)$ at a unique point $x\in\Gamma$, with $W^u(x_-)$ and $W^s(x_+)$ being $n$-dimensional manifolds. However, there is no trajectory of the system that goes from near $x_-$ to near $x_+$. Instead, the trajectory of $x$ approaches asymptotically the backwards orbit of $x_-$ in negative time, and approaches asymptotically the forward orbit of $x_+$ in positive time. For applications we need
a dynamical version of the scattering map. That is, we need a map that takes some backwards image of $x_-$ into some forward image of $x_+$. We will call this map a transition map. The transition map depends on the amounts of times
we want to flow in the past and in the future.  The transition map carries the same geometric information as the scattering map. Since in perturbation problems the scattering map can be computed explicitly, the transition map is also computable. The notion of transition map below is similar to the transition map defined in \cite {Cresson2008}, however, their version is not related to the scattering map.

\subsection{Transition  map for continuous and discrete dynamical systems}
Consider a flow $\Phi: M\times \mathbb{R}\to M$ defined on a manifold $M$ that possesses a normally hyperbolic invariant manifold $\Lambda\subseteq M$. Assume that $W^u(\Lambda)$ and $W^s(\Lambda)$ have a transverse intersection, and that there exists a homoclinic channel $\Gamma$. Given $t_u,t_s>0$,
the time-map $\Phi^{t_s+t_u}$ is a diffeomorphism from
$\Phi^{-t_u}(\Gamma)$ to $\Phi^{t_s}(\Gamma)$. Using
\eqref{eqn:scatteringexplicit flow} we can express the restriction of
$\Phi^{t_s+t_u}$ to $\Phi^{-t_u}(\Gamma)$  in terms of the scattering
map as \[\Phi^{t_s+t_u}_{\mid
\Phi^{-t_u}(\Gamma)}:(\Omega^{\Phi^{-t_u}(\Gamma)}_-)^{-1}(\Phi^{-t_u}(U_-))
\to (\Omega^{\Phi^{t_s}(\Gamma)}_+)^{-1}(\Phi^{t_s}(U_+)), \]  given by
\begin{equation}
\Phi^{t_s+t_u}_{\mid \Phi^{-t_u}(\Gamma)}=(\Omega ^{\Phi^{t_s}(\Gamma)}_+)^{-1}\circ \Phi^{t_s}\circ
S^\Gamma\circ \Phi^{t_u}\circ (\Omega ^{\Phi^{-t_u}(\Gamma)}_-),
\end{equation}
where $S^\Gamma:U_-\to U_+$ is the scattering map associated to the
homoclinic channel $\Gamma$. We use this to define the transition map as an  an approximation of $\Phi^{t_s+t_u}_{\mid \Phi^{-t_u}(\Gamma)}$ provided that $t_u,t_s$ are sufficiently large.

\begin{definition} \label{defn:transition_map flow} Let $\Gamma$ be a homoclinic channel for $\Phi$. Let $t_u, t_s>0$ fixed.
The transition map $S_{t_u,t_s}^{\Gamma}$ is a diffeomorphism
\[S_{t_u,t_s}^{\Gamma}:\Phi^{-t_u}(U_-)
\to  \Phi^{t_s}(U_+)  \] given by
\begin{equation*}\label{eqn:transitiondefn flow}
S_{t_u,t_s}^{\Gamma}= \Phi^{t_s}\circ
S^\Gamma\circ \Phi^{t_u} ,
\end{equation*}
where $S^\Gamma:U_-\to U_+$ is the scattering map associated to the
homoclinic channel $\Gamma$.
\end{definition}

Alternatively, we can express the transition map as
\begin{equation*}
S_{t_u,t_s}^{\Gamma}=\Omega^{\Phi^{t_s(\Gamma)}}_+\circ \Phi^{t_u+t_s}\circ(\Omega^{\Phi^{-t_u(\Gamma)}}_-)^{-1}
\end{equation*}

The symplectic property and the transversality property of the scattering map lend themselves to similar properties of the
transition map.

In the case of a dynamical system given by a map $F:M\to M$, the transition map can be defined in a similar manner
to the flow case, and enjoys similar properties.
As before, we assume that $\Lambda\subseteq M$ is a normally hyperbolic invariant manifold for $F$. \begin{definition} \label{defn:transition_map map} Let $\Gamma$ be a homoclinic channel for $F$. Let $N_u, N_s>0$ fixed.
The transition map $S_{N_u,N_s}^{\Gamma}$ is a diffeomorphism
\[S_{N_u,N_s}^{\Gamma}:F^{-N_u}(U_-)
\to  F^{N_s}(U_+)  \] given by
\begin{equation*}
S_{N_u,N_s}^{\Gamma}= F^{N_s}\circ
S^\Gamma\circ F^{N_u} ,
\end{equation*}
where $S^\Gamma:U_-\to U_+$ is the scattering map associated to the
homoclinic channel $\Gamma$.
\end{definition}

\subsection{Transition map for the return map}

We will consider the reduction of the transition map to a local surface of section. Let $\Sigma$ be a local surface of section and $\Lambda_\Sigma=\Lambda\cap \Sigma$. By Theorem \ref{thm:normalpoincare}, $\Lambda_\Sigma$ is normally hyperbolic with respect to the first return map to $\Sigma$.
Assume that $\Gamma$ intersects $\Sigma$ as in Subsection \ref{subsection:normalpoincare}, and let $\Gamma_\Sigma=\Gamma\cap\Sigma$.

Let $x$ be a point in $\Gamma_\Sigma$.
Then $\Phi^{-t_u}(x)$ lies on $W^u(\Phi^{-t_u}(x_-))$, approaches asymptotically $\Lambda$ as $t_u\to \infty$,  and intersects $\Sigma$ infinitely many times. Similarly,  $\Phi^{t_s}(x)$ lies on $W^u(\Phi^{t_s}(x_+))$, approaches asymptotically $\Lambda$ as $t_s\to \infty$,  and intersects $\Sigma$ infinitely many times.

We want to choose and fix some times $t_u,t_s$, depending on $x\in\Gamma$, such that $\Phi^{-t_u}(x), \Phi^{t_s}(x)$ are both in $\Sigma$, and moreover, $\Phi^{-t_u}(x), \Phi^{t_s}(x)$ are sufficiently close to $\Phi^{-t_u}(x_-), \Phi^{t_s}(x_+)$, respectively.

Let $\upsilon>0$ be a small positive number. We define $t_u=t_u(x)$ to be the smallest time such that $\Phi^{-t_u(x)}(x) \in \Sigma$, and the distance between $\Phi^{-t_u}(x)$ and $\Phi^{-t_u}(x_-)$, measured along the unstable fiber $W^u(\Phi^{-t_u}(x_-))$, is less than $\upsilon$. Let $N_u>0$ be such that $\Phi^{-t_u}(x)=F^{-N_u}(x)$.
Similarly, we define $t_s=t_s(x)$ to be the smallest time such that $\Phi^{t_s}(x) \in\Sigma$, and the distance between $\Phi^{t_s}(x)$ and $\Phi^{t_s}(x_+)$, measured along the stable fiber $W^s(\Phi^{t_s}(x_+))$, is less than $\upsilon$. Let $N_s>0$ be such that $\Phi^{t_s}(x)=F^{N_S}(x)$.

At this point, we have a transition map $S^{\Gamma}_{t_u,t_s}$ associated to the flow $\Phi$ and to the homoclinic channel $\Gamma$ for the flow, and a transition map $S^{\Gamma_\Sigma}_{N_u,N_s}$ associated to the map $F$ and to the homoclinic channel $\Gamma_\Sigma$ for the map.

We have that $\Phi^{-t_u}(\Gamma)$ and $\Phi^{t_s}(\Gamma)$ are both homoclinic channels for the flow $\Phi$, and
$F^{-N_u}(\Gamma)$ and $F^{N_s}(\Gamma)$ are both homoclinic channels for the map $F$.  Let us consider the projection mappings $P_-^{F^{-N_u}}(\Gamma), P_+^{F^{-N_u}}(\Gamma) $ associated to the homoclinic channel $F^{-N_u}(\Gamma)$, and
the projection mappings $P_-^{F^{N_s}}(\Gamma), P_+^{F^{N_s}}(\Gamma) $ associated to the homoclinic channel $F^{N_s}(\Gamma)$. These projections mappings are defined as in Subsection \ref{subsection:normalpoincare}.

The relationship between the transition map for the flow $\Phi$ and the transition map for the return map $F$ is given by the following:

\begin{proposition} \label{prop:transitionmapflow1} Assume that $\Gamma$ is a homoclinic channel for the flow $\Phi$, and $\Gamma_\Sigma=\Gamma\cap \Sigma$ is the corresponding homoclinic channel for the map $F$. Let $t_u$, $t_s$, $N_u$, $N_s>0$ be fixed.
Let $S^\Gamma_{N_u,N_s}$ be the transition map corresponding to $\Gamma$ for the flow $\Phi$, and let $S_{N_u,N_s}^{\Gamma_\Sigma}$ be the transition map  corresponding to $\Gamma_\Sigma$ for the return map $F$.
Then
\[S^{\Gamma_\Sigma}_{N_u,N_s}=P^{F^{N_s}(\Gamma)}_+\circ S^{\Gamma}_{t_u,t_s}\circ (P^{F^{-N_u(\Gamma)}}_-)^{-1}.\]
\end{proposition}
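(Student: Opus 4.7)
The plan is to reduce this to Proposition~\ref{prop:scatteringmapflow1} by exploiting the fact that the transition map is essentially a scattering map between \emph{shifted} homoclinic channels. Writing $S^\Gamma_{t_u,t_s} = \Phi^{t_s} \circ S^\Gamma \circ \Phi^{t_u}$ and $S^{\Gamma_\Sigma}_{N_u,N_s} = F^{N_s} \circ S^{\Gamma_\Sigma} \circ F^{N_u}$, the identity to prove is really a statement about how the projection maps $P^\Gamma_\pm$ transform when we shift $\Gamma$ along the flow by times $-t_u$ and $t_s$.

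The first step is to establish the intertwining relations
\[
P^{\Phi^{-t_u}(\Gamma)}_- \circ \Phi^{-t_u} \;=\; F^{-N_u} \circ P^\Gamma_-, \qquad
P^{\Phi^{t_s}(\Gamma)}_+ \circ \Phi^{t_s} \;=\; F^{N_s} \circ P^\Gamma_+,
\]
on the appropriate subsets of $\Lambda$. To verify the first, pick $x_- \in \Omega^\Gamma_-(\Gamma)$; from the analysis in Subsection~\ref{subsection:scatteringreturn} the ``minus-foot'' of $\Phi^{-t_u}(x) \in \Phi^{-t_u}(\Gamma)$ is $\Phi^{-t_u}(x_-)$ for the flow, and $F^{-N_u}(\hat x_-) = F^{-N_u}(P^\Gamma_-(x_-))$ for the return map. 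Applying $P^{\Phi^{-t_u}(\Gamma)}_-$ flows $\Phi^{-t_u}(x_-)$ forward along the trajectory until it meets $\Sigma$, and that intersection is precisely $F^{-N_u}(\hat x_-)$, since $t_u, t_s, N_u, N_s$ have been chosen so that $F^{-N_u}(y) = \Phi^{-t_u}(y)$ on the points of $\Gamma_\Sigma$ involved. The second relation is proved symmetrically using stable fibers.

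The second step is algebraic. Inverting the first relation gives $(P^{\Phi^{-t_u}(\Gamma)}_-)^{-1} = \Phi^{-t_u} \circ (P^\Gamma_-)^{-1} \circ F^{N_u}$. Substituting this together with the expression for $P^{\Phi^{t_s}(\Gamma)}_+$ into the right-hand side of the claimed identity yields
\[
P^{\Phi^{t_s}(\Gamma)}_+ \circ S^\Gamma_{t_u,t_s} \circ (P^{\Phi^{-t_u}(\Gamma)}_-)^{-1}
= F^{N_s} \circ P^\Gamma_+ \circ \Phi^{-t_s} \circ \Phi^{t_s} \circ S^\Gamma \circ \Phi^{t_u} \circ \Phi^{-t_u} \circ (P^\Gamma_-)^{-1} \circ F^{N_u}.
\]
The flow factors cancel, and Proposition~\ref{prop:scatteringmapflow1} collapses $P^\Gamma_+ \circ S^\Gamma \circ (P^\Gamma_-)^{-1}$ to $S^{\Gamma_\Sigma}$, leaving $F^{N_s} \circ S^{\Gamma_\Sigma} \circ F^{N_u} = S^{\Gamma_\Sigma}_{N_u,N_s}$, as desired.

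The main obstacle is purely bookkeeping: ensuring the domains and codomains line up and, more delicately, that the correspondence $F^{-N_u}(x) = \Phi^{-t_u}(x)$ (and the analogous one with $t_s, N_s$) really holds uniformly on the relevant slice of $\Gamma_\Sigma$ for the fixed integers $N_u, N_s$. Once that compatibility (which is implicit in the statement) is granted, the proof is purely a composition of maps and does not require any new analytic input beyond Proposition~\ref{prop:scatteringmapflow1}.
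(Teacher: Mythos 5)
Your proposal is correct and follows essentially the same route as the paper: the intertwining relations you state in Step 1 are exactly the identities $\hat x_-=F^{N_u}\circ P^{F^{-N_u}(\Gamma)}_-\circ\Phi^{-t_u}(x_-)$ and $\hat x_+=F^{-N_s}\circ P^{F^{N_s}(\Gamma)}_+\circ\Phi^{t_s}(x_+)$ that the paper's proof rests on, and your algebraic cancellation reproduces the paper's chain of equalities (the paper chases the point $\hat x_-\mapsto\hat x_+$ rather than writing operator identities, and uses $S^{\Gamma_\Sigma}(\hat x_-)=\hat x_+$ directly instead of citing Proposition \ref{prop:scatteringmapflow1}, but the content is identical). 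Your closing remark about the compatibility $F^{-N_u}=\Phi^{-t_u}$ on $\Gamma_\Sigma$ correctly identifies the bookkeeping point that the paper likewise leaves implicit.
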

\begin{proof}
We have that $S^{\Gamma_\Sigma}(\hat x_-)=\hat x_+$. Note that $\hat x_-=F^{N_u}\circ P^{F^{-N_u(\Gamma)}}_- \circ \Phi^{-t_u}(x_-)$ and $\hat x_+=F^{-N_s}\circ P^{F^{N_s(\Gamma)}}_+ \circ \Phi^{t_s}(x_+)$.

Thus
\begin{eqnarray*}
S^{\Gamma_\Sigma}(\hat x_-)&=&\hat x_+\\&=&F^{-N_s}\circ P^{F^{N_s(\Gamma)}}_+ \circ \Phi^{t_s}(x_+) \\
&=&F^{-N_s}\circ P^{F^{N_s(\Gamma)}}_+ \circ \Phi^{t_s}\circ S(x_-)\\
&=&F^{-N_s}\circ P^{F^{N_s(\Gamma)}}_+ \circ \Phi^{t_s}\circ S \circ \Phi^{t_u}\circ (P_-^{F^{-N_u}(\Gamma)})^{-1}\circ F^{-N_u} (\hat x_-).
\end{eqnarray*}

Hence
\begin{eqnarray*}
F^{N_s}\circ S^{\Gamma_\Sigma}\circ F^{N_u}= P^{F^{N_s(\Gamma)}}_+ \circ \Phi^{t_s}\circ S^\Gamma \circ \Phi^{t_u}\circ (P_-^{F^{-N_u}(\Gamma)})^{-1}.\end{eqnarray*}

The conclusion of the proposition now follows from the definition of the transition map in the flow case and  the definition of the transition map in the map case.
\end{proof}

\section{Topological method of correctly aligned windows}\label{section:topological}

We review briefly the topological method of correctly aligned windows. We follow \cite{GideaZ04a}. See also \cite{GideaR03,GideaL06}.

\begin{definition}
An $(m_1,m_2)$-window in an $m$-dimensional manifold $M$, where $m_1+m_2=m$, is a
compact subset $R$ of $M$ together with a $C^0$-parametrization given by a
homeomorphism $\chi$ from some open neighborhood of $[0,1]^{m_1}\times [0,1]^{m_2}$ in $\mathbb{R}^{m_1}\times \mathbb{R}^{m_2}$ to an open subset of $M$, with $R=\chi([0,1]^{m_1}\times [0,1]^{m_2})$, and with a choice of an `exit set' \[R^{\rm
exit} =\chi \left(\partial[0,1]^{m_1}\times [0,1]^{m_2} \right )\]
and  of an `entry set'  \[R^{\rm entry}
=\chi \left([0,1]^{m_1}\times
\partial[0,1]^{m_2}\right ).\]
\end{definition}
We adopt the following notation:
$R_\chi=\chi^{-1}(R)$, $(R^{\rm exit}
)_\chi=\chi^{-1}(R^{\rm exit} )$, and $(R^{\rm
entry})_\chi=\chi^{-1}(R^{\rm entry})$.
(Note that
$R_\chi=[0,1]^{m_1}\times[0,1]^{m_2}$, $(R^{\rm
exit} )_\chi=\partial [0,1]^{m_1}\times[0,1]^{m_2}$, and $(R^{\rm entry}
)_\chi=[0,1]^{m_1}\times \partial [0,1]^{m_2}$.)
When the local parametrization
$\chi$ is evident from context, we suppress the subscript $\chi$
from the notation.

\begin{definition}\label{defn:corr}
Let  $R_1$ and $R_2$ be $(m_1,m_2)$-windows, and let $\chi_1$ and $\chi_2$ be the corresponding local parametrizations.
Let $F$ be a continuous
map on $M$ with $F(\textrm {im}(\chi_1))\subseteq \textrm
{im}(\chi_2)$.  We say that $R_1$ is correctly aligned with
$R_2$ under $F$ if the following conditions are satisfied:
\begin{itemize}
\item[(i)] There exists a continuous homotopy $h:[0,1]\times
(R_1){\chi_1} \to {\mathbb R}^{m_1} \times {\mathbb R}^{m_2}$,
   such that the following conditions hold true
   \begin{eqnarray*}
      h_0&=&F_\chi, \\
      h([0,1],(R^{\rm exit}_1)_{\chi_1}) \cap (R_2)_{\chi_2} &=& \emptyset, \\
      h([0,1],(R_1)_{\chi_1}) \cap (R_2^{\rm entry})_{\chi_2} &=& \emptyset,
   \end{eqnarray*}
   where $F_\chi= \chi_2^{-1}\circ F\circ \chi_1$,  and
\item[(ii)]
the map $A_{y_0}:\mathbb{R}^{m_1}\to\mathbb{R}^{m_1}$ defined by
$A_{y_0}(x)=\pi _{m_1}\left(h_{1}(x, y_0)\right )$ satisfies
\begin{eqnarray*}
A_{y_0}\left ( \partial[0,1]^{m_1}\right )\subseteq \mathbb
{R}^{m_1}\setminus [0,1]^{m_1},\\\deg({A_{y_0}},0)\neq 0,\end{eqnarray*}
where $\pi_{m_1}: \mathbb{R}^{m_1}\times \mathbb{R}^{m_2}\to
\mathbb{R}^{m_1}$ is the  projection onto the first
component, and $\deg$ is the Brouwer degree of the map $A_{y_0}$ at
$0$.
\end{itemize}
\end{definition}

The following result  allows the detection of orbits with prescribed itineraries.
\begin{theorem}
\label{theorem:detorb} Let $R_i$ be a collection of
$(m_1,m_2)$-windows in $M$, where $i\in\mathbb{Z}$ or
$i\in\{0,\ldots, d-1\}$, with $d>0$ (in the latter case, for
convenience, we let \hbox{$R_{i}=R_{(i\,{\rm mod}\, d)}$} for all
$i\in \mathbb{Z}$). Let $F_i$ be a collection of continuous maps on
$M$. If $R_i$ is correctly aligned with $R_{i+1}$, for all $i$, then
there exists a point $p\in R_0$ such that
\[(F_{i}\circ \ldots\circ F_{0})(p)\in R_{i+1},\]
Moreover, if $R_{i+k}=R_{i}$ for some $k>0$ and all $i$, then the
point $p$ can be chosen periodic in the sense
\[(F_{k-1}\circ \ldots\circ F_{0})(p)=p.\]
\end{theorem}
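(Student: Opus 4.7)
The plan has three main ingredients: (I) correct alignment is preserved under composition of maps, (II) each correct alignment produces a nonempty compact preimage set, and (III) a nested-intersection argument delivers the orbit statement, with the periodic case handled by a Brouwer-degree fixed-point argument.

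First, I would prove a composition lemma: if $R_0$ is correctly aligned with $R_1$ under $F_0$ and $R_1$ is correctly aligned with $R_2$ under $F_1$, then $R_0$ is correctly aligned with $R_2$ under $F_1\circ F_0$. The homotopies from the two individual alignments can be concatenated and glued through $R_1$ to produce a homotopy $h$ for the composed map; the boundary-avoidance conditions propagate because the first alignment keeps $F_0(R_0^{\text{exit}})$ disjoint from $R_1$ and keeps $F_0(R_0)$ away from $R_1^{\text{entry}}$, and the second alignment then transfers this to disjointness from $R_2$ and from $R_2^{\text{entry}}$. The Brouwer degree of the composed $A_{y_0}$-map equals the product of the individual degrees (both nonzero), hence is nonzero. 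By induction this extends to any finite chain, so $R_0$ is correctly aligned with $R_n$ under $G_n := F_{n-1}\circ \cdots \circ F_0$.

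Second, I would derive from the degree condition in Definition \ref{defn:corr}(ii) that the preimage set
\[
K_n := \{p\in R_0 \mid (F_i\circ\cdots\circ F_0)(p)\in R_{i+1} \text{ for } i=0,\ldots,n-1\}
\]
is nonempty and compact. Nonemptiness follows from the one-step covering property: since $A_{y_0}$ has nonzero Brouwer degree at $0$ and sends $\partial[0,1]^{m_1}$ outside $[0,1]^{m_1}$, the equation $A_{y_0}(x)=0$ has a solution for every $y_0\in[0,1]^{m_2}$; by homotopy invariance along $h$ together with the boundary-avoidance conditions, the corresponding point $\chi_0(x,y_0)$ lies in $G_n^{-1}(R_n)\cap R_0$. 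Compactness is immediate from continuity of $G_n$ and compactness of the $R_i$. Since $\{K_n\}_{n\geq 1}$ is a nested decreasing sequence of nonempty compact subsets of $R_0$, the finite intersection property gives $\bigcap_{n\geq 1}K_n \neq \emptyset$, and any $p$ in this intersection satisfies $(F_i\circ\cdots\circ F_0)(p)\in R_{i+1}$ for every $i$, proving the first part.

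For the periodic case with $R_{i+k}=R_i$ and $F_{i+k}=F_i$, the composition lemma yields that $R_0$ is correctly aligned with itself under $G_k$, with the same degree condition inherited. I would then prove a self-alignment fixed-point lemma: any self-correctly-aligned window contains a fixed point of the map. To do so, consider the map $\Psi(x,y):=(G_k)_\chi(x,y)-(x,y)$ on $(R_0)_{\chi_0}$. By the boundary-avoidance conditions $(G_k)_\chi((R_0^{\text{exit}})_{\chi_0})\cap (R_0)_{\chi_0}=\emptyset$ and $(G_k)_\chi((R_0)_{\chi_0})\cap (R_0^{\text{entry}})_{\chi_0}=\emptyset$, no zero of $\Psi$ lies on $\partial R_0$. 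Deforming $(G_k)_\chi$ along the homotopy $h$ to a model map whose first component is $A_{y_0}$ and whose second component is the identity in $y$ (up to a controlled perturbation allowed by the entry-set avoidance), one computes the Brouwer degree of $\Psi$ at $0$ to be $\pm\deg(A_{y_0},0)\neq 0$. This produces a zero of $\Psi$ and hence a fixed point of $G_k$ in $R_0$, which is the periodic point required.

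The main obstacle is the self-alignment fixed-point lemma. The difficulty is to convert the degree condition on the $m_1$-projection — which only controls behavior in the exit direction — into a degree statement for the full $(m_1+m_2)$-dimensional fixed-point map $\Psi$. One must carefully design the splitting homotopy so that the entry-direction coordinate $y$, which is not directly constrained by the alignment, is governed by the "no image hits the entry set" condition of Definition \ref{defn:corr}, and then invoke the excision and product properties of Brouwer degree to extract the nonzero degree for $\Psi$.
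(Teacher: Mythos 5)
First, a remark on the comparison: the paper itself does not prove Theorem \ref{theorem:detorb}; it quotes it from \cite{GideaZ04a}, so your proposal must be measured against the argument given there. The genuine gap in your proposal is the composition lemma of step (I), which is false as stated. Correct alignment of $R_0$ with $R_1$ under $F_0$ does not require $F_0(R_0)\subseteq R_1$, only that the image crosses $R_1$; and the alignment of $R_1$ with $R_2$ under $F_1$ constrains $F_1$ (and its homotopy) only on $R_1$. Hence for a point $x\in R_0^{\rm exit}$ with $F_0(x)\notin R_1$, the value $F_1(F_0(x))$ is completely unconstrained and may land in $R_2$, so already at $t=0$ the required condition $(F_1\circ F_0)_{\chi}\bigl((R_0^{\rm exit})_{\chi_0}\bigr)\cap (R_2)_{\chi_2}=\emptyset$ can fail. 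A one-dimensional example: $R_0=R_1=R_2=[0,1]$ with exit set $\{0,1\}$, $F_0(x)=3x-1$, and $F_1(y)=3y-1$ for $y\in[0,1]$ but $F_1(-1)=F_1(2)=1/2$; then each single alignment holds, yet $(F_1\circ F_0)(0)=1/2\in R_2$. Everything downstream inherits this gap: the nonemptiness of $K_n$ is deduced from the (unestablished) alignment of $R_0$ with $R_n$ under $G_n$, and even if that alignment held it would only produce a point of $G_n^{-1}(R_n)\cap R_0$, which is strictly weaker than membership in $K_n$ since the intermediate conditions $(F_i\circ\cdots\circ F_0)(p)\in R_{i+1}$ for $i<n-1$ are not enforced. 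The periodic case likewise rests on a self-alignment of $R_0$ under $G_k$ that has not been shown.

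The proof in \cite{GideaZ04a} avoids composition altogether: it performs a single Brouwer-degree (fixed-point-index) computation on the product $\prod_{i=0}^{n-1}(R_i)_{\chi_i}$, for a map whose zeros are exactly the orbit segments $x_{i+1}=F_i(x_i)$ with $x_i\in R_i$ for all $i$. The individual homotopies are run simultaneously in the separate factors, the boundary conditions of Definition \ref{defn:corr} make the homotopy admissible on the boundary of the product, and the degree of the limiting model map factors as the product of the nonzero degrees $\deg(A_{y_0},0)$. This yields nonemptiness and compactness of $K_n$ directly, after which your nested-intersection step is correct; the periodic case is obtained by closing up the indices modulo $k$ in the same product-space computation, which produces a genuine fixed point of $F_{k-1}\circ\cdots\circ F_0$ without any self-alignment lemma. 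If you prefer an inductive argument, the induction must be carried out on the sets $K_n$ themselves, with the degree localized (by excision) to the part of $R_0$ whose partial orbit has stayed in all intermediate windows — not on composed covering relations.
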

Often,  the maps $F_i$ represent
different powers of the return map associated to a certain
surface of section. The orbit of the point $p$ found above is not necessarily unique.

The correct alignment of windows is robust, in the sense that if two
windows are correctly aligned under a map, then they remain
correctly aligned under a sufficiently small perturbation of the
map.

\begin{proposition}\label{prop:windowsrobust}
Assume $R_1,R_2$ are $(m_1,m_2)$-windows in $M$.
Let $G$ be a continuous maps on $M$. Assume that $R_1$ is
correctly aligned with $R_2$ under $G$. Then there exists $\epsilon
>0$, depending on the windows $R_1,R_2$ and $G$,  such that, for every continuous map $F$ on $M$ with  $\|F(x) - G(x)\| < \epsilon$ for all $x \in
R_1$, we have that $R_1$ is correctly aligned with $R_2$ under $F$.
\end{proposition}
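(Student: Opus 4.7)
The plan is to construct, for every $F$ sufficiently close to $G$, a new homotopy witnessing the correct alignment of $R_1$ with $R_2$ under $F$, by concatenating a short straight-line homotopy from $F_\chi$ to $G_\chi$ with the homotopy already provided by the correct alignment under $G$. The key observation is that the disjointness conditions in Definition~\ref{defn:corr}(i) are open in the $C^0$-topology on homotopies, and that the time-1 slice of the new homotopy can be arranged to coincide with the time-1 slice of the old one, which trivialises condition (ii).

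First, I would translate the hypothesis $\|F(x)-G(x)\|<\epsilon$ for $x\in R_1$ into an estimate on the coordinate representations $F_\chi=\chi_2^{-1}\circ F\circ\chi_1$ and $G_\chi=\chi_2^{-1}\circ G\circ\chi_1$. Since $\chi_2^{-1}$ is uniformly continuous on a compact neighbourhood of $G(R_1)$, for every $\eta>0$ there is an $\epsilon=\epsilon(\eta)>0$ such that $\|F-G\|<\epsilon$ on $R_1$ implies $\|F_\chi(x)-G_\chi(x)\|<\eta$ for all $x\in(R_1)_{\chi_1}$. So it suffices to choose $\eta$ small enough in coordinates.

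Next, let $h^G$ be a homotopy witnessing the correct alignment of $R_1$ with $R_2$ under $G$, so that $h^G_0=G_\chi$ and conditions (i)-(ii) hold. Because $[0,1]\times(R_1^{\rm exit})_{\chi_1}$, $[0,1]\times(R_1)_{\chi_1}$, $(R_2)_{\chi_2}$ and $(R_2^{\rm entry})_{\chi_2}$ are compact and $h^G$ is continuous, the two distances
\[
\delta_1:=\operatorname{dist}\bigl(h^G([0,1]\times(R_1^{\rm exit})_{\chi_1}),\,(R_2)_{\chi_2}\bigr),\qquad
\delta_2:=\operatorname{dist}\bigl(h^G([0,1]\times(R_1)_{\chi_1}),\,(R_2^{\rm entry})_{\chi_2}\bigr)
\]
are both strictly positive. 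Set $\eta=\tfrac12\min(\delta_1,\delta_2)$ and choose $\epsilon=\epsilon(\eta)$ as above. Define
\[
h^F(t,x)=\begin{cases}(1-2t)F_\chi(x)+2tG_\chi(x),& t\in[0,\tfrac12],\\ h^G(2t-1,x),& t\in[\tfrac12,1].\end{cases}
\]
Then $h^F_0=F_\chi$ and $h^F_1=h^G_1$. For $x\in(R_1^{\rm exit})_{\chi_1}$, the first half of $h^F$ stays in the closed ball of radius $\|F_\chi(x)-G_\chi(x)\|<\eta<\delta_1$ around $G_\chi(x)$, which is disjoint from $(R_2)_{\chi_2}$ since $G_\chi(x)$ is at distance $\geq\delta_1$ from that set; the second half is $h^G$, which already avoids $(R_2)_{\chi_2}$ by choice. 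The same argument with $\delta_2$ gives the second disjointness. Hence condition (i) holds for $h^F$.

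Finally, for condition (ii), observe that $h^F_1=h^G_1$, so the auxiliary map $A_{y_0}^F=\pi_{m_1}\circ h^F_1(\cdot,y_0)$ coincides identically with $A_{y_0}^G$, and inherits both the boundary condition $A_{y_0}^G(\partial[0,1]^{m_1})\subseteq\mathbb{R}^{m_1}\setminus[0,1]^{m_1}$ and the nonzero degree at $0$. This completes the verification of the two conditions in Definition~\ref{defn:corr}, so $R_1$ is correctly aligned with $R_2$ under $F$. The only subtle point to check carefully is the distance estimate on the linear-segment part of $h^F$, which is where the choice $\eta<\min(\delta_1,\delta_2)$ is used; the rest of the argument is a straightforward concatenation-and-compactness computation.
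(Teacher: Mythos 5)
Your proof is correct, and it is essentially the standard argument: the paper itself states Proposition \ref{prop:windowsrobust} without proof (deferring to \cite{GideaZ04a}), and the concatenation of a straight-line homotopy from $F_\chi$ to $G_\chi$ with the homotopy witnessing alignment under $G$, together with the compactness argument giving $\delta_1,\delta_2>0$ and the observation that the time-one slice (hence the degree condition) is unchanged, is exactly how robustness of covering relations is established there. The only point worth flagging is the implicit precondition $F(\mathrm{im}(\chi_1))\subseteq\mathrm{im}(\chi_2)$ needed for $F_\chi$ to be defined; since $G(R_1)$ is compact in the open set $\mathrm{im}(\chi_2)$, shrinking $\epsilon$ handles this on $R_1$, which is all the alignment conditions actually use.
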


Also, the correct alignment satisfies a natural product property.
Given two windows and a map, if each window can be written as a
product of window components, and if the components of the first
window are correctly aligned with the corresponding components of
the second window under the appropriate components of the map, then
the first window is correctly aligned with the second window under
the given map. For example, if we consider a pair of windows in a neighborhood of a
normally invariant normally hyperbolic invariant manifold, if the center components
of the windows are correctly aligned and the hyperbolic components of the windows
are also correctly aligned, then the windows are correctly aligned.
Although the product property is quite intuitive, its
rigorous statement is rather technical, so we will omit it here.
The details can be found in \cite{GideaL06}.

\section{A shadowing lemma for normally hyperbolic invariant manifolds}\label{sec:shadowing}

In this section we present a shadowing lemma-type of result.
It is assumed the existence of a sequence of windows in the normally hyperbolic
invariant manifold $\Lambda$,  with the windows of the same dimension $l$ as $\Lambda$. It is assumed
that  the sequence of windows is made up of pairs of windows $D_i^-,D^+_{i+1}$ that are  correctly aligned under the transition map $S^\Gamma_{N^-_i, N^+_{i+1}}$, alternately with pairs of windows  $D^+_{i+1},D^-_{i+1}$ that are correctly aligned under some power $F^{N_i^0}$ of the restriction of $F$ to $\Lambda$. Here, the superscript $\pm$ for the windows $D^\pm_{i}$ suggest that $D^\pm_{i}$ is typically obtained by taking some positive (negative) iteration of some other window that lies in the codomain (domain) of the scattering map.

It is required that the numbers $N^0_i,N^-_i,N^+_i$ can be chosen arbitrarily large, but uniformly bounded relative to $j$.
The conclusion is that there exists a true
orbit in the full space dynamics that follows these windows arbitrarily closely. The resulting orbit is not necessarily unique.

The result below provides a method to reduce the
problem of the existence of orbits in the full dimensional phase
space to a lower dimensional problem of the existence of
pseudo-orbits in the normally hyperbolic invariant manifold.

\begin{theorem}\label{lem:shadowing1}
Assume that there exists a  bi-infinite
sequence $\{D^+_i,D^-_i\}_{i\in\mathbb{Z}}$ of $l$-dimensional windows contained in a compact subset of $\Lambda$ such that,  for any integers   $n^0_1,n^-_1,n^+_1>0$,  there exist  integers $n^0_2>n^0_1$, $n^-_2>n^-_1$, $n^+_2>n^+_1$  and
 sequences of integers $\{N^0_i,N^-_i,N^+_i,\}_{i\in\mathbb{Z}}$  with $n^0_1<N^0_i<n^0_2$, $n^-_1<N^-_i<n^-_2$, $n^+_1<N^+_i<n^+_2$
such that the following properties hold for all $i\in\mathbb{Z}$:
\begin{itemize}
\item[(i)] $F^{-N^+_{i}}(D^+_{i})\subseteq U_+$ and $F^{N^-_i}(D^-_{i})\subseteq U_-$.
\item[(ii)] $D^-_{i}$ is
correctly aligned with $D^+_{i+1}$ under the transition map
$S^\Gamma_{N^-_i,N^+_{i+1}}=F^{N^+_{i+1}}\circ S\circ F^{N^-_{i}}$.\item[(iii)] $D^+_{i}$ is correctly aligned with
$D^-_{i}$ under the iterate $F^{N^0_{i}}$ of
$F_{\mid \Lambda}$.
\end{itemize}
Then, for every $\eps>0$,    there exist an orbit  $\{F^{N}(z)\}_{N\in\mathbb{Z}}$ of $F$ for some $z\in M$, and
an increasing sequence of integers $\{N_i\}_{i\in\mathbb{Z}}$  with $N_{i+1}=N_i+N^+_{i+1}+N^0_{i+1}+N^-_{i+1}$ such
that, for all $i$:
\[ \begin{split}
d(F^{N_i}(z),\Gamma)<\eps,\\
d(F^{N_{i}-N^-_{i}}(z), D^-_{i})<\eps,\\
d(F^{N_i+N^+_{i+1}}(z), D^+_{i+1})<\eps.
\end{split}\]

\end{theorem}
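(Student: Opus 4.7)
The plan is to lift each $l$-dimensional center window $D_i^\pm \subset \Lambda$ to an $m$-dimensional window $\tilde D_i^\pm \subset M$ by thickening with small disks in the local stable and unstable fiber directions, to verify that consecutive pairs of lifted windows in the interleaved sequence $(\ldots, \tilde D_i^+, \tilde D_i^-, \tilde D_{i+1}^+, \ldots)$ are correctly aligned under the appropriate iterates of $F$, and finally to invoke Theorem~\ref{theorem:detorb} to extract the shadowing orbit. Using the normal form coordinates $(c,s,u)$ from Section~\ref{sec:lambda}, I would set
\[
\tilde D_i^\pm := \{(c,s,u)\,:\,c \in D_i^\pm,\, \|s\| \leq \delta,\, \|u\| \leq \delta\},
\]
with $\delta>0$ chosen smaller than the prescribed $\eps$. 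If $D_i^\pm$ is an $(l_1,l_2)$-window in $\Lambda$, then $\tilde D_i^\pm$ is an $(l_1+n,\,l_2+n)$-window in $M$, with the unstable-disk boundary contributing to the exit set and the stable-disk boundary contributing to the entry set.

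The inner correct alignment of $\tilde D_i^+$ with $\tilde D_i^-$ under $F^{N_i^0}$ follows from the product property of correctly aligned windows: the center component is exactly hypothesis~(iii), and the hyperbolic components are handled by normal hyperbolicity—for $n_1^0$ sufficiently large, $F^{N_i^0}$ contracts the stable disk of $\tilde D_i^+$ strictly inside $\{\|s\| \leq \delta\}$ and expands the unstable disk of $\tilde D_i^+$ across and beyond $\{\|u\| \leq \delta\}$.

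The outer (transition) alignment of $\tilde D_i^-$ with $\tilde D_{i+1}^+$ under $F^{N_i^-+N_{i+1}^+}$ is the geometric core of the argument. For each $c \in D_i^-$, hypothesis~(i) places $F^{N_i^-}(c)$ in $U_-$, so the unstable disk $\{F^{N_i^-}(c)\}\times B^u_\delta$ meets $W^s(\Lambda)$ transversely at a homoclinic point of $\Gamma$ whose stable-fiber footpoint is $S^\Gamma(F^{N_i^-}(c)) \in U_+$. Applying the Lambda Lemma (Theorem~\ref{thm:lambdalemma}) parametrically in $c$ and $s$ to the $n$-dimensional unstable slices of $F^{N_i^-}(\tilde D_i^-)$, the connected component of $F^{N_i^-+N_{i+1}^+}(\tilde D_i^-)$ near $F^{N_{i+1}^+}\circ S^\Gamma\circ F^{N_i^-}(D_i^-) \subset \Lambda$ becomes, once $n_1^+$ is large enough, the graph of a $C^1$-small function over the full unstable disk, with stable component tending to $0$ and center component tending to the transition map $S^\Gamma_{N_i^-,N_{i+1}^+}(c)$. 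Hypothesis~(ii) supplies the center-direction alignment with $D_{i+1}^+$, and the product alignment property again delivers the required correct alignment of the thickened windows.

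Once these alignments are established, Theorem~\ref{theorem:detorb} yields a point $z$ whose $F$-orbit visits each $\tilde D_i^\pm$ in order; with $N_i := N_0 + \sum_{j \leq i}(N_j^+ + N_j^0 + N_j^-)$, the estimates $d(F^{N_i-N_i^-}(z),D_i^-)<\eps$ and $d(F^{N_i+N_{i+1}^+}(z),D_{i+1}^+)<\eps$ follow from $\delta < \eps$. For the remaining estimate $d(F^{N_i}(z),\Gamma) < \eps$, one augments the sequence with an intermediate window at a chosen point of $\Gamma$ between $\tilde D_i^-$ and $\tilde D_{i+1}^+$, correctly aligned with both via the same constructions. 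The main obstacle is the outer step: one must marry the scattering-map transversality with the quantitative Lambda-Lemma control of the forward iterates, keeping all bounds uniform across $c \in D_i^-$ (which uses compactness of the family of windows) so that the topological correct-alignment conditions of Definition~\ref{defn:corr} actually hold.
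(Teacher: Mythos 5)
Your overall architecture (thicken the $l$-dimensional center windows to full-dimensional windows, verify the alignments, invoke Theorem \ref{theorem:detorb}) matches the paper's, and your treatment of the inner step $\tilde D_i^+ \to \tilde D_i^-$ via the product property is essentially the paper's Step 3. The genuine gap is in the outer step. First, the assertion that the unstable disk $\{F^{N_i^-}(c)\}\times B^u_\delta$ meets $W^s(\Lambda)$ at a homoclinic point of $\Gamma$ is false for $\delta<\eps$ small: $\Gamma$ lies at a definite positive distance from $\Lambda$, while that disk stays within distance $O(\delta)$ of $\Lambda$. What reaches $\Gamma$ is the expanded image $F^{N_i^-}(\{c\}\times B^u_\delta)$, and only provided $N_i^-$ is large enough relative to $\delta$ --- a compatibility condition you never impose. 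Second, and more seriously, you cannot invoke the product alignment property for the single map $F^{N_i^-+N_{i+1}^+}$ across the homoclinic excursion: that map has no product structure with respect to the center/stable/unstable splitting at $\Lambda$, because the excursion exchanges the roles of the directions --- the unstable thickening of $\tilde D_i^-$ must first be converted, near $\Gamma$, into a transversal to $W^s(\Lambda)$, before the Lambda Lemma can convert it back into an unstable-type exit direction near $D_{i+1}^+$. This is exactly where the transversality conditions \eqref{eq:dynamical channel} and \eqref{eq:transverse foliation} must enter, and your argument never uses them.

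The paper resolves this by interposing two intermediate full-dimensional windows $\bar R_i^-$ and $\bar R_{i+1}^+$ anchored at $\Gamma$ itself: $\bar R_i^-$ is built over $\bar D_i^-=(\Omega_-^\Gamma)^{-1}\bigl(F^{N_i^-}(D_i^-)\bigr)$ by thickening along unstable fibers and then normally to $W^u(\Lambda)$, while $\bar R_{i+1}^+$ is built over $\bar D_{i+1}^+=(\Omega_+^\Gamma)^{-1}\bigl(F^{-N_{i+1}^+}(D_{i+1}^+)\bigr)$ by thickening along stable fibers and normally to $W^s(\Lambda)$; the outer link is then decomposed into three alignments: $R_i^-\to\bar R_i^-$ under $F^{N_i^-}$, $\bar R_i^-\to\bar R_{i+1}^+$ under the identity (this is where the transversality of $W^u(\Lambda)$ and $W^s(\Lambda)$ and the relative size choices $\bar\delta_i^->\bar\delta_{i+1}^+$, $\bar\eta_i^-<\bar\eta_{i+1}^+$ are used), and $\bar R_{i+1}^+\to R_{i+1}^+$ under $F^{N_{i+1}^+}$. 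This decomposition also yields $d(F^{N_i}(z),\Gamma)<\eps$ for free, whereas in your write-up that estimate is deferred to ``an intermediate window at a chosen point of $\Gamma$'' --- a single such window cannot serve all $c\in D_i^-$; you need a window fibered over the whole relevant piece of $\Gamma$, which is precisely the construction you omitted. Until the identity alignment at $\Gamma$ is carried out (together with the uniformity-in-$i$ argument of the paper's Step 4), the outer step of your proof is not established.
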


\begin{proof}
The idea of this proof is to `thicken' the windows $D^+_{i},D^-_{i}$ in
$\Lambda$ to  full-dimensional windows $R^-_i,R^+_i$ in $M$, so that the
successive windows in the sequence $\{R^-_i,R^+_i\}_i$  are correctly
aligned under some appropriate iterations of the map $F$.
The argument is done in several steps.  In  the first three steps, we only specify the relative sizes of the windows involved in each step. In the fourth step, we explain how to make the choices of the sizes of the windows uniform.

\textit{Step 1.} At this step, we  take a pair of $l$-dimensional windows $D^-_{i}$ and $D^+_{i+1}$ as in the statement of the theorem and, through applying iteration and the wave maps  we construct two  $(l+2n)$-dimensional windows  $\bar R^-_{i}$ and $\bar R^+_{i+1}$ near $\Gamma$ such that $\bar R^-_{i}$ is correctly aligned with $\bar R^+_{i+1}$ under the identity map.

Note that conditions (i) and (ii) imply that
$\hat D^-_{i}:=F^{N^-_{i}}(D^-_{i})\subseteq U_-\subseteq \Lambda$ is
correctly aligned with $\hat D^+_{i+1}:=F^{-N^+_{i+1}}(D^+_{i+1})\subseteq U_+\subseteq \Lambda$ under
the scattering map $S$.
Let $\bar D^-_{i}=(\Omega_-^\Gamma)^{-1}(\hat D^-_{i})$ and
$\bar D^+_{i+1}=(\Omega_+^\Gamma)^{-1}(\hat D^+_{i+1})$ be the copies of $\hat D^-_{i}$ and
$\hat D^+_{i+1}$, respectively, in the homoclinic channel $\Gamma$.  By
making some arbitrarily small changes in the sizes of their exit and entry directions, we
can alter the windows $\hat D^-_{i}$ and $\hat D^+_{i+1}$ such that $\hat D^-_{i}$ is correctly aligned with $\bar D^-_{i}$
under $(\Omega^-_\Gamma)^{-1}$, $\bar D^-_{i}$ is correctly aligned with $\bar D^+_{i+1}$ under the identity mapping, and
$\bar D^+_{i+1}$ is correctly aligned with $\hat D^+_{i+1}$ under
$\Omega^+_\Gamma$.

We `thicken' the $l$-dimensional windows $\bar D^-_{i}$ and $\bar D^+_{i+1}$
in $\Gamma$, which are correctly aligned under the identity mapping,
to $(l+2n)$-dimensional windows that are correctly aligned under the
identity map. We now explain the `thickening' procedure.

First, we describe how to thicken $\bar D^-_i$ to a full dimensional window $\bar R^-_i$.
We choose some $0<\bar\delta^-_{i}<\eps$ and $0<\bar\eta^-_{i}<\eps$. At each
point $x\in \bar D^-_i$ we choose an $n$-dimensional closed ball $\bar
B^-_{\bar\delta^-_{i}}(x)$ of radius $\bar\delta^-_{i}$ centered at $x$ and
contained in $W^u({x_-})$, where $x_-=\Omega_{-}^\Gamma(x)$. We take the
union $\bar\Delta^-_{i}:=\bigcup_{x\in \bar D^-_{i}}\bar
B^{u}_{\bar\delta^-_{i}}(x)$. Note that $\bar\Delta^-_{i}$ is contained in
$W^u(\Lambda)$ and is homeomorphic to an $(l+n)$-dimensional
rectangle.  We define the exit set and the entry set of this
rectangle as follows:
\[\begin{split}(\bar\Delta^-_{i})^{\rm exit}:=\bigcup_{x\in (\bar D^-_{i})^{\rm exit}}
 \bar B^{u}_{\bar\delta^-_{i}}(x) \cup
\bigcup_{x\in \bar D^-_{i}}\partial \bar B^{u}_{\bar\delta^-_{i}}(x),\\
(\bar\Delta^-_{i})^{\rm entry}:=\bigcup_{x\in (\bar D^-_{i})^{\rm entry}}
 \bar B^{u}_{\bar\delta^-_{i}}(x).
\end{split}\]

We consider the normal bundle $N^+$ to $W^u(\Lambda)$. At
each point $y\in \bar\Delta^-_{i}$,
we choose an $n$-dimensional closed ball $\bar B^+_{\bar\eta^-_{i}}(y)$
centered at $y$ and contained in the image of  $N^+_y\subseteq T_yM$ under the
exponential map $\exp_y:N^+_y\to M$. We let
$\bar R^-_{i}:=\bigcup_{y\in\bar\Delta^-_{i}}\bar B^{s}_{\bar\eta^-_{i}}(y)$.
By the Tubular Neighborhood Theorem (see, for example
\cite{BurnsG05}), we have that for $\bar\eta^-_{i}>0$ sufficiently
small, the set $\bar R^-_{i}$ is a homeomorphic copy of an
$(l+2n)$-rectangle. We now define the exit set and the
entry set of $\bar R^-_{i}$ as follows:
\[\begin{split} (\bar R^-_{i})^{\rm
exit}:=\bigcup_{y\in(\bar\Delta^-_{i})^{\rm
exit}}\bar B^{s}_{\bar\eta^-_{i}}(y),\\
(\bar R^-_{i})^{\rm entry}:=\bigcup_{y\in(\bar\Delta^-_{i})^{\rm
entry}}\bar B^{s}_{\bar\eta^-_{i}}(y) \cup
\bigcup_{y\in(\bar\Delta^-_{i})}\partial \bar B^{s}_{\bar\eta^-_{i}}(y).
\end{split}
\]

Second, we describe in a similar fashion how to thicken $\bar D^+_{i+1}$ to a full dimensional window
$\bar R^+_{i+1}$. We choose $0<\bar\delta^+_{i+1}<\eps$ and
$0<\bar\eta^+_{i+1}<\eps$. We consider the $(l+n)$-dimensional rectangle
$\bar\Delta^+_{i+1}:=\bigcup_{x\in \bar D^+_{i+1}}\bar
B^{s}_{\bar\eta^+_{i+1}}(x)\subseteq W^s(\Lambda)$, where $\bar
B^+_{\bar\eta^+_{i+1}}(x)$ is the $n$-dimensional closed ball of radius
$\bar\eta^+_{i+1}$ centered at $x$ and contained in $W^s(x_+)$, with
$x_+=\Omega_{+}^\Gamma(x)$. The exit set and entry set of this window
are defined as follows:
\[\begin{split}(\bar\Delta^+_{i+1})^{\rm exit}:=\bigcup_{x\in (\bar D^+_{i+1})^{\rm exit}}
 \bar B^{s}_{\bar\eta^+_{i+1}}(x),\\
(\bar\Delta^+_{i+1})^{\rm entry}:=\bigcup_{x\in (\bar D^+_{i+1})^{\rm
entry}}
 \bar B^{s}_{\bar\eta^+_{i+1}}(x)\cup \bigcup_{x\in (\bar D^+_{i+1})}
\partial \bar B^{s}_{\bar\eta^+_{i+1}}(x).
\end{split}\] We let
$\bar R^+_{i+1}:=\bigcup_{y\in\bar\Delta^+_{i+1}}\bar B^{u}_{\bar\delta^+_{i+1}}(y)$,
where $\bar B^-_{\bar\delta^+_{i+1}}(y)$ is the  $n$-dimensional closed ball
centered at $y$ and contained in the image of  $N^-_y\subseteq T_yM$ under the
exponential map $\exp_y:N^-_y\to M$, and $N^-$ is the normal bundle to $W^s(\Lambda)$.
The Tubular
Neighborhood Theorem implies that for $\bar\delta^+_{i+1}>0$
sufficiently small the set $\bar R^+_{i+1}$ is a homeomorphic copy of a
$(l+2n)$-rectangle. The exit set and the entry set of
$\bar R^+_{i+1}$ are defined by:
\[\begin{split} (\bar R^+_{i+1})^{\rm
exit}:=\bigcup_{y\in(\bar\Delta^+_{i+1})^{\rm
exit}}\bar B^{u}_{\bar\delta^+_{i+1}}(y)\cup
\bigcup_{y\in(\bar\Delta^+_{i+1}}\partial \bar B^{u}_{\bar\delta^+_{i+1}}(y),\\
(R^+_{i+1})^{\rm entry}:=\bigcup_{y\in(\bar\Delta^+_{i+1})^{\rm
entry}}\bar B^{u}_{\bar\delta^+_{i+1}}(y).
\end{split}
\]

This completes the description of the thickening of the
$l$-dimensional window $\bar D^-_{i}$ into an $(l+2n)$-dimensional window
$\bar R^-_{i}$, and of the thickening of the $l$-dimensional window
$\bar D^+_{i+1}$ into an $(l+2n)$-dimensional window $\bar R^+_{i+1}$. Note that,
by construction, $\bar R^-_{i}$ and $\bar R^+_{i+1}$ are both contained in an
$\eps$-neighborhood of $\Gamma$.

Now we want to make $\bar R^-_{i}$ correctly aligned with $\bar R^+_{i+1}$
under the identity map. This is achieved by choosing
$\bar\delta^+_{i+1}$ sufficiently small relative to $\bar\delta^-_{i}$, and by choosing $\bar\eta^-_{i}$ sufficiently small relative to $\bar\eta^+_{i+1}$. Thus, we have $\bar\delta^-_{i}>\bar\delta^+_{i+1}$ and
$\bar\eta^-_{i}<\bar\eta^+_{i+1}$ (we stress that these inequalities alone may not suffice for  the correct alignment). Choosing $\bar\delta^+_{i+1}$ and $\bar\eta^-_{i}$ small enough agrees with the constraints imposed by the Tubular Neighborhood Theorem.

\textit{Step 2.}
At this step, we expand the given $l$-dimensional window $D^-_i$ to an $(l+2n)$-dimensional window $R_i^-$ such that  $R_i^-$ is correctly aligned with $\bar R_i^-$ under some positive iterate, and we also expand the given $l$-dimensional window $D^+_{i+1}$ to an $(l+2n)$-dimensional window $R_{i+1}^+$ such that  $\bar R_{i+1}^+$ is correctly aligned with $\bar R_{i+1}^+$ under some positive iterate.

We take a negative iterate $F^{-M}(\bar R^-_{i})$ of $\bar R^-_{i}$, where $M>0$.
We have that $F^{-M}(\Gamma)$ is $\eps$-close to $\Lambda$ on a
neighborhood in the $C^1$-topology,  for all $M$ sufficiently large. The vectors tangent to the fibers $W^u(x_-)$ in $\bar R^-_i$ are contracted, and the vectors transverse to
$W^u(\Lambda)$ along $\bar R^-_i\cap W^u(\Lambda)$ are expanded by the
derivative of $F^{-M}$. We choose and fix $M=N^-_i$ sufficiently large. We obtain that, in particular,
$F^{-N^-_i}(\bar R^-_{i})$ is  $\eps$-close to
$D^-_i=F^{-N^-_{i}}(\hat D^-_{i})$.

We now construct a window $R^-_{i}$ about $D^-_{i}$ that is correctly
aligned with the window $F^{-N^-_{i}}(\bar R^-_{i})$ under the identity. Note that each closed ball $\bar B^{u}_{\delta^-_{i}}(x)$, which is a part of  $\bar\Delta^- _{i}$, gets exponentially contracted  as it is mapped into  $W^u(F^{-{N^-_i}}(x_-))$ by $F^{-{N^-}_{i}}$. By the Lambda Lemma (Proposition \ref{thm:lambdalemma}), each closed ball $\bar B^s_{\eta^-_{i}}(y)$ with $y\in \bar\Delta^-_{i}$, which is a part of $\bar R^-_{i}$,  $C^1$-approaches a subset of $W^s(F^{-{M}}(y_-))$ under $F^{-{M}}$, as $M\to\infty$. For $N^-_i$ sufficiently large, we may assume that $F^{-{N^-_i}}(\bar B^s_{\eta^-_{i}}(y))$  is $\eps$-close to a subset of $W^s(F^{-{N^-_i}}(y^-))$ in the $C^1$-topology, for all $y\in \bar\Delta^- _{i}$.
As $\hat D^-_{i}$ is correctly aligned with $\bar D^-_{i}$ under
$(\Omega_-^\Gamma)^{-1}$, we have that
$D^-_{i}=F^{-N^-_{i}}(\hat D^-_{i})$ is correctly aligned with
$F^{-N^-_{i}}(\bar D^-_{i})$ under
$(\Omega_-^{F^{-N^-_{i}}(\Gamma)})^{-1}$. In other words,
$D^-_{i}$ is
correctly aligned under the identity mapping with the projection of
$F^{-N^-_{i}}(\bar D^-_{i})$ onto $\Lambda$ along the unstable fibres.
Let us consider $0<\delta^-_{i}<\eps$ and $0<\eta^-_{i}<\eps$.

To define the window $R^-_{i}$  we use a local linearization of the normally hyperbolic invariant manifold.

For $\Lambda$ normally hyperbolic, let $N\Lambda=(E^u\oplus E^s)_{\mid\Lambda}=\bigcup_{p\in \Lambda}\{p\}\times E^u_p\times E^s_p$ be the normal bundle to $\Lambda$, and $NF=TF_{\mid N\Lambda}$, where \[TF(p,v^u,v^s)=(F(p),DF_p(v^u),DF_p(v^s))\textrm{ for all } p\in\Lambda,v^u\in E^u, v^s\in E^s.\] By Theorem 1 in \cite{PughS70}, there exists a homeomorphisms $h$ from an open neighborhood of the zero section of $N\Lambda$ to
a neighborhood of $\Lambda$ in $M$ such that $F\circ h=h\circ NF$.

Since  $D^-_i$ is contractible the bundles are trivial on
$D^-_i$ and we can identify $(E^u\oplus E^s)_{D^-_i}$ with
$D^-_i \times E^u_{x } \times E^s_{x }$. At each point $x\in D^-_i$ we define  a rectangle $H^-_i(x)$ of the type $h(\{x\}\times\bar B^u_{\delta^-_{i}}(0)\times \bar B^s_{\eta^-_{i}}(0))$, where $\bar B^u_{\delta^-_{i}}(0)$ is the closed ball centered at $0$ of radius $\delta^-_{i}$  in the unstable space $E^u_x$, and $\bar B^s_{\eta^-_{i}}$ is the closed ball centered at $0$ of radius $\eta^-_{i}$ in the stable space $E^s_x$. We set the exit and entry sets of $H^-_{i}(x)$ as $(H^-_{i}(x))^{\rm exit}=h(\{x\}\times\partial \bar B^u_{\delta^-_{i}}(0)\times \bar B^s_{\eta^-_{i}}(0))$ and $(H^-_{i}(x))^{\rm entry}=h(\{x\}\times\bar B^u_{\delta^-_{i}}(0)\times \partial \bar B^s_{\eta^-_{i}}(0))$.

Then we define the window $R^-_{i}$  as follows:
\[\begin{split}R^-_{i}=\bigcup_{x\in D^-_{i}}H^-_{i}(x),\\
(R^-_{i})^{\rm exit}=\bigcup_{x\in (D^-_{i})^{\rm exit}}H^-_{i}(x) \cup \bigcup_{x\in D^-_{i}}(H^-_{i}(x))^{\rm exit},\\
(R^-_{i})^{\rm entry}=\bigcup_{x\in (D^-_{i})^{\rm entry}}H^-_{i}(x) \cup \bigcup_{x\in D^-_{i}}(H^-_{i}(x))^{\rm entry}.
\end{split}\]

In order to ensure the correct alignment of $R^-_{i}$ with $F^{-N^-_i}(\bar R^-_{i})$ under the identity map, it is sufficient to choose $\delta^-_i,\eta^-_i$ such that
$\bigcup_{x\in D^-_{i}}h(\{x\}\times\bar B^u_{\delta^-_{i}}(0)\times \{0\})$ is correctly aligned with $F^{-N^-_{i}}(\bar\Delta^-_{i})$ under the identity map (the exit sets of both windows being in the unstable directions),
and that each closed ball $F^{-N^-_{i}}(\bar B^s_{\eta^-_{i}})$ intersects $R_{i}$ in a closed ball  that is contained in the interior of $F^{-N^-_{i}}(\bar B^s_{\eta^-_{i}})$. The existence of suitable $\delta^-_i,\eta^-_i$ follows from the
exponential contraction of  $\bar\Delta^-_{i}$ under negative iteration, and from the Lambda Lemma applied to  $\bar B^s_{\eta^-_{i}}(y)$ under negative iteration.

In a similar fashion, we construct a window $R^+_{i+1}$   contained in an $\eps$-neighborhood of $\Lambda$ such  that $\bar R^+_{i+1}$ is correctly aligned with $R^+_{i+1}$ under $F^{N^+_{i+1}}$.
The window $R^+_{i+1}$, and its entry and exit sets, are  defined by:
\[\begin{split}R^+_{i+1}=\bigcup_{x\in D^+_{i+1}}H^+_{i+1}(x),\\
(R^+_{i+1})^{\rm exit}=\bigcup_{x\in (D^+_{i+1})^{\rm exit}}H^+_{i+1}(x) \cup \bigcup_{x\in D^+_{i+1}}(H^+_{i+1}(x))^{\rm exit},\\
(R^+_{i+1})^{\rm entry}=\bigcup_{x\in (D^+_{i+1})^{\rm entry}}H^+_{i+1}(x) \cup \bigcup_{x\in D^+_{i+1}}(H^+_{i+1}(x))^{\rm entry},
\end{split}\]
where $H^+_{i+1}(x)=h(\{x\}\times\bar B^u_{\delta^+_{i+1}}(0)\times \bar B^s_{\eta^+_{i+1}}(0))$, $(H^+_{i+1}(x))^{\rm exit}$, and $(H^+_{i+1}(x))^{\rm entry}$  are defined as before for some appropriate choices of radii $\delta^+_{i+1},\eta^+_{i+1}>0$.

\textit{Step 3.}
At this step, we take the $(l+2n)$-dimensional window $R_{i+1}^+$ and $\bar R_{i+1}^-$ as constructed in the previous step, and we make
$R_{i+1}^+$ correctly aligned with $\bar R_{i+1}^-$
under some positive iterate.

Suppose that we have constructed the window $R^+_{i+1}$ about the $l$-dimensional rectangle $D^+_{i+1}\subseteq \Lambda$ and the window $R^-_{i+1}$ about the $l$-dimensional rectangle $R^-_{i+1}\subseteq \Lambda$. Under positive iterations,  the rectangle $\bar B^u_{\delta^+_{i+1}}(0)\times \bar B^s_{\eta^+_{i+1}}(0)\subseteq E^u\oplus E^s$ gets exponentially expanded in the unstable direction and exponentially contracted in the stable direction by $DF$. Thus $\bar B^u_{\delta^+_{i+1}}(0)\times \bar B^s_{\eta^+_{i+1}}(0)$ is correctly aligned with $\bar B^u_{\delta^-_{i+1}}(0)\times \bar B^s_{\eta^-_{i+1}}(0)$ under  the  power $DF^{N^0_{i+1}}$ of $DF$, provided $N^0_{i+1}$ is sufficiently large.
This implies that  $F^{N^0_{i+1}}(h(\{x\}\times \bar B^u_{\delta^+_{i+1}}(0)\times \bar B^s_{\delta^+_{i+1}}(0)))$ is correctly aligned with $h({F^{N^0_{i+1}}(x)}\times \bar B^u_{\delta^-_{i+1}}(0)\times \bar B^s_{\delta^-_{i+1}}(0))$ under the identity map (both rectangles are contained in $h({F^{N^0_{i+1}}(x)}\times E^u\times E^s)$).

Since $D^+_{i+1}$ is correctly aligned with $D^-_{i+1}$ under $F^{N^0_i}$, the product property of correctly aligned windows implies that $R^+_{i+1}$ is correctly aligned with $R^-_{i+1}$ under $F^{N^0_{i+1}}$, provided that $N^0_{i+1}$ is sufficiently large.

\textit{Step 4.}
At this step we will use the previous steps to construct a bi-infinite sequences of windows $\{R^\pm_{i},\bar R^\pm_{i}\}_{i\in\mathbb{Z}}$ such that, for each $i$, the windows $\{R^\pm_{i}\}$ are obtained by thickening the rectangles $\{D^\pm_i\}\subseteq \Lambda$, the windows $\{\bar R^\pm_{i+1}\}$  are obtained by thickening some rectangles  $\{\bar D^\pm_i\}\subseteq\Gamma$, and, moreover,   $R^-_{i}$ is correctly aligned with $\bar R^-_{i}$ under $F^{N^-_{i}}$, $\bar R^-_{i}$  is correctly aligned with $\bar R^+_{i+1}$  under the identity map, $\bar R^+_{i+1}$ is correctly aligned with $R^+_{i+1}$ under $F^{N^+_{i+1}}$, and $ R^+_{i+1}$ is correctly aligned with $R^-_{i+1}$ under $F^{N^0_i}$.

We can assume without loss of generality that $\Lambda$ and $\Gamma$  are compact. We fix an $\eps$-neighborhood $V$ of $\Lambda$.  Using the compactness of $\Lambda$ and $\Gamma$ and the uniform boundedness of the iterates $N^-_i,N^+_i,N^0_i$, we now show how to choose  the sizes of the stable and unstable components of the windows $\{R^\pm_{i},\bar R^\pm_{i}\}_{i\in\mathbb{Z}}$ constructed in the previous steps in a uniformly bounded manner.

For each point $x$ in $\Lambda$ we consider a $(2n)$-dimensional window
$h(\{x\}\times \bar B^u_\delta\times\bar B^s_\eta )$, for some $0<\delta,\eta<\eps$, where $h$ is the local conjugacy between $F$ and $DF$ near $\Lambda$. Then $F^{N^0_i}(h(\{x\}\times \bar B^u_\delta\times\bar B^s_\eta )$ is correctly aligned with  $h(\{F^{N^0_i}(x)\}\times \bar B^u_\delta(0)\times\bar B^s_\eta(0))$, for all $n^0_1\leq N^0_i\leq n^0_2$, provided that $n^0_1$ is chosen sufficiently large. For each $i$, we  thicken $D^+_{i}$ and $D^-_{i}$ into full dimensional windows $R^+_{i}$ and of $R^-_{i}$ respectively, as described in Step 2, where for the sizes of the components of these windows we choose $\delta^\pm_i=\delta$ and $\eta^\pm_i=\eta$ for all $i$. Since $D^+_{i}$ is correctly aligned with $D^-_{i}$ under $F^{N^0_{i}}$, then, as in Step 3, it follows  that $R^+_{i}$ is correctly aligned with $R^-_{i}$ under $F^{N^0_{i}}$.

We also define the set \[\Upsilon^0=\bigcup_{x\in\Lambda} h(\{x\}\times \bar B^s_\eta(0)\times \bar B^s_\delta(0)).\]
This set cannot be realized as a window since it does not have exit/entry directions associated to the $\Lambda$ components. However, for each $x\in\Lambda$, the set $h(\{x\}\times \bar B^s_\eta(0)\times \bar B^u_\delta(0))$ is a well defined window, with the exit given by the hyperbolic unstable directions.   Note that $\Upsilon^0(x)\subseteq h(\{x\}\times \bar W^u(x)\times \bar W^s(x))$ for each $x\in\Gamma$.

We let  $\bar\Delta^-=\bigcup _{x\in \Gamma}\bar B^u_{\bar\delta^-}(x)$, with $\bar B^u_{\bar\delta^-}(x)$ being the closed  ball centered at $x$ of radius $\bar\delta^-$ in $W^u(x_-)$. For each point $y\in \bar\Delta^-$ we consider the closed ball $\bar B^s_{\bar\eta^-}(y)$ centered at $y$ of radius $\bar\eta^-$ in the image under $\exp_y$ of the normal subspace $N_y$ to  $W^u(\Lambda)$ at $y$. Similarly, we let $\bar\Delta^+=\bigcup _{x\in \Gamma}\bar B^s_{\bar\eta^+}(x)$, where $\bar B^u_{\bar\eta^+}(x)\subseteq W^s(x_+)$, and for each  $y\in \bar\Delta^+$ we consider the closed ball $\bar B^u_{\bar\delta^+}(y)$  in the image under $\exp_y$ of the normal subspace $N_y$ to  $W^s(\Lambda)$ at $y$.
We define the sets
\[
\Upsilon^-=\bigcup_{y\in \bar\Delta^-} \bar B^s_{\bar\eta^-}(y),\,
\Upsilon^+=\bigcup_{y\in \bar\Delta^+} \bar B^u_{\bar\delta^+}(y).
\]
These sets cannot be realized as windows as there are no well defined exit/entry directions associated to their $\Gamma$ components. However, for each $x\in \Gamma$, the set $\Upsilon^-(x)= \bigcup_{y\in B^u_{\bar\delta^-}(x)} \bar B^s_{\bar\eta^-}(y)$ is a well defined $(2n)$-dimensional window, with the exit  given by the hyperbolic unstable directions.    Note that $\Upsilon^-(x)\subseteq \bigcup_{y\in W^u(x_-)}\exp_y(N_y)$. The intersection of $\bigcup_{y\in W^u(x_-)}\exp_y(N_y)$ with $\Upsilon^+$ defines a window $\Upsilon^+(x)$ with the exit given by the hyperbolic unstable directions.
Due to the compactness of $\Gamma$, there exist $\delta^\pm,\eta^\pm$ such that $\Upsilon^-(x)$ is correctly aligned with $\Upsilon^+(x)$ for all $x\in\Gamma$. We choose and fix such  $\delta^\pm,\eta^\pm$.
We define the windows $\bar R^\pm_i$ at Step 1 with the choices of $\delta^\pm_i=\delta^\pm$, and $\eta^\pm_i=\eta^\pm$, for all $i$. It follows that $\bar R^-_i$ is correctly aligned with $\bar R^+_i$ under the identity map for all $i$.

Due to the compactness of $\Gamma$ and the uniform expansion and contraction of the hyperbolic directions, there  exist $n^-_1,n^+_1$ such that, for all $N^-_i>n^-_1$, $N^+_i>n^+_2$, we have $F^{-N^-_i}(\Gamma)\subseteq V$ and
$F^{N^+_i}(\Gamma)\subseteq V$ for all $i\in\mathbb{Z}$, where $V$ is the neighborhood of $\Lambda$ where the local linearization is defined. For any such $n^-_1,n^+_1$, the assumptions of Lemma \ref{lem:shadowing1} provide us with some $n^-_2>n^-_i$, $n^+>n^-_i$. Moreover, we choose $n^-_1,n^+_1$  such that for all $N^-_i>n^-_1$, $N^+_i>n^+_2$ we have \begin{itemize}
\item[(i)] $\Upsilon^0(F^{-N^-_i}(x_-))$ is correctly aligned with $F^{-N^-_i}(\Upsilon^-)\cap  h(\{\tilde F^{-N^-_i}(x_-)\} \times\break \bar W^u(F^{-N^-_i}(x_-)) \times \bar W^s(F^{-N^-_i}(x_-)))]$ under the identity map, \item [(ii)] $F^{N^+_i}(\Upsilon^+(x))$ is correctly aligned with
$\Upsilon^0\cap h(\{\tilde F^{N^+_i}(x_+)\}\times \bar W^u(F^{N^+_i}(x_+)) \times \bar W^s(F^{N^+_i}(x_+)))]$  under the
identity map.
\end{itemize}
From these choices, it follows that the windows $R^-_i, R^+_i$ constructed in Step 2 satisfy that  $R^-_i$ is correctly aligned with $\bar R^-_i$ under $F^{N^-_i}$, and $\bar R^+_i$ is correctly aligned with $R^+_i$ under $F^{N^+_i}$.

This concludes the construction of windows $\{R^\pm_{i},\bar R^\pm_{i}\}_{i\in\mathbb{Z}}$ of uniform sizes,  such that $R^-_{i}$ is correctly aligned with $\bar R^-_{i}$ under $F^{N^-_{i}}$, $\bar R^-_{i}$  is correctly aligned with $\bar R^+_{i+1}$  under the identity map, $\bar R^+_{i+1}$ is correctly aligned with $R^+_{i+1}$ under $F^{N^+_{i+1}}$, and $ R^+_{i+1}$ is correctly aligned with $R^-_{i+1}$ under $F^{N^0_i}$. The windows $R^\pm_i$ are contained in $\eps$-neighborhoods of the given rectangles $D^\pm_i$, respectively, and the windows  $R^\pm_i$ are contained in $\eps$-neighborhoods of some rectangles $\bar D^\pm_i$, respectively.

By Theorem \ref{theorem:detorb}, there exits an orbit $F^{N}(z)$ that visits the windows $\{R^\pm_{i},\bar R^\pm_{i}\}_{i\in\mathbb{Z}}$  in the prescribed order. More precisely, if $F^{N_i}(z)$ is the corresponding point in $\bar R^-_i\cap R^+_{i+1}$, then  $F^{N_i+N^+_{i+1}}(z)$ is in $R^+_{i+1}$, $F^{N_i+N^+_{i+1}+N^0_{i+1}}(z)$ is in $ R^-_{i+1}$, and\break $F^{N_i+N^+_{i+1}+N^0_{i+1}+N^-_{i+1}}(z)$ is in $\bar R^-_{i+1}\cap  \bar R^+_{i+2}$,    for all $i$. This means that $N_{i+1}=N_i+N^+_{i+1}+N^0_{i+1}+N^-_{i+1}$ for all $i$. The existence of the shadowing orbit concludes the proof.
\end{proof}

\section*{Acknowledgement} Part of this work has been done while M.G. was visiting
the Centre de Recerca Matem\`atica, Barcelona, Spain, for whose
hospitality he is very grateful.

\section*{Appendix} In this section we discuss  the  application of the  techniques discussed in this paper to show the existence of unstable orbits in dynamical systems. We describe two models for Hamiltonian instability and we explain how the transition map and the shadowing lemma
are utilized.

\subsubsection*{The large gap problem of Arnold diffusion} The first model is related to the  Arnold diffusion problem for Hamiltonian systems \cite{Arnold64}. This problem conjectures that  generic Hamiltonian systems that are close to integrable possess trajectories the move `wildly' and `arbitrarily far'. The model, from \cite{DelshamsLS2006},  describes  a rotator and a pendulum with a small, periodic coupling. Consider the time-dependent Hamiltonian,
\begin{eqnarray}\label{eqn:hamiltonian}H_\mu(p, q, I,\phi,
t)&=&h_0(I)+P_{\pm}(p,q)+\mu h(p, q,I,\phi, t; \mu),
\end{eqnarray}
where $(p, q, I,\phi, t)\in(\mathbb{R}\times \mathbb{T}^1)^2\times\mathbb{T}^1$, and
assume:
\begin{itemize}
\item[(i)] $V$, $h_0$ and $h$ are uniformly $C^r$ with $r$ sufficiently large;
\item[(ii)] $P_{\pm}(p,q)=\pm(\frac{1}{2}p^2+V(q))$ where $V$ is periodic in $q$ of
period $2\pi$ and has a unique non-degenerate global maximum at $(0,0)$; thus $P_{\pm}(p,q)$ has a  homoclinic orbit $(p^0 (\sigma),q^0 (\sigma))$ to $(0,0)$, with  $\sigma\in\mathbb{R}$;
\item[(iii)] $h_0$ satisfies a uniform twist condition $\partial^2
h_0/\partial I^2>\theta$, for some $\theta>0$ and all $I$ in
some interval $(I^-, I^+)$, with $I^-<I^+$ independent of $\mu$;
\item[(iv)] $h$ is a trigonometric polynomial in $(\phi,t)$, periodic of period $2\pi$ in both $\phi,t$, \item[(v)] The Melnikov potential associated to $(p^0 (\sigma),q^0 (\sigma))$, given by
\begin{eqnarray*}
\mathcal{L}(I,\phi,t)=&\displaystyle-\int_{-\infty}^{\infty}&\left
[h(p^0(\sigma),q^0(\sigma),I,\phi+\omega(I)\sigma,t+\sigma;0)\right .\\
& &\left .-h(0,0,I,\phi+\omega(I)\sigma,t+\sigma;0)\right ]d\sigma;
\end{eqnarray*}
where $\omega(I)=(\partial h_0/\partial I)(I)$, satisfies  the following non-degeneracy conditions:
\begin{itemize}
\item[(v.a)] For each $I\in(I^-,I^+)$, and each
$(\phi, t)$ in some open set in $\mathbb{T}^1\times \mathbb{T}^1$, the
map
\[\tau\in\mathbb{R} \to
\mathcal{L}(I,\phi-\omega(I)\tau,t-\tau)\in\mathbb{R}\] has a non-degenerate
critical point $\tau^*$, which can be parameterized as
\[\tau^*=\tau^*(I,\phi,t);\]

\item[(v.b)] For each $(I,\phi, t)$ as above, the
function\[(I,\phi,t)\to
\frac{\partial{\mathcal{L}}}{\partial\phi}(I,\phi-\omega(I)\tau^*,t-\tau^*)
\] is non-constant, negative for  $P_{-}$,  and positive for $P_{+}$;
\item[(v.c)] The perturbation term $h$
satisfies  some additional non-degeneracy conditions described in hypothesis (H5) of Theorem 7 in \cite{DelshamsLS2006}.
\end{itemize}
\end{itemize}
The objective is to show that there exists $\mu_0>0$ such that, for each $0<\mu<\mu_0$, the Hamiltonian has a trajectory $x(t)$  such that $I(x(0))<I^-$ and $I(x(T))>I^+$ for some $T>0$.

In the sequel, we show that the arguments in \cite{DelshamsLS2006,GideaL06,DelshamsGLS2008,GideaR09} can be combined with the techniques developed in this paper to prove the existence of diffusing trajectories as above.

(1) We can make the Hamiltonian $H_\mu$ into an autonomous  Hamiltonian by adding an extra variable $A$ symplectically conjugate with the time $t$. The variable $A$ has no dynamical role. We then restrict to an energy level, which can be parametrized by the variables $(p, q,I,\phi, t)$ with $A$ an implicit function of these variables.
When $\mu=0$ the set $\tilde\Lambda_0=\{(p, q,I,\phi, t),\, p=q=0, I\in[I^-,I^+]\}$ is a $3$-dimensional normally hyperbolic invariant manifold with boundary for the flow of $H_0$. It is foliated by $2$-dimensional invariant tori  $\tilde{\mathcal{T}}_{I'}=\{(p, q,I,\phi, t),\, p=q=0, I=I'\}$ with $I'\in[I^-,I^+]$.
The stable and unstable manifolds of $\tilde\Lambda_0$ coincide, i.e. $W^u(\tilde \Lambda_0)=W^s(\tilde \Lambda_0)$. Also $W^u(\tilde{\mathcal{T}}_{I'})=W^s(\tilde{\mathcal{T}}_{I'})$ for all $I'\in [I^-,I^+]$. This follows from (ii).

(2) There exists $\mu_0>0$ small such that for each $0<\mu<\mu_0$ the manifold $\tilde\Lambda_0$ has a continuation $\tilde\Lambda_\mu$ that is a  normally hyperbolic invariant manifold for the flow of $H_\mu$. Inside $\Lambda_\mu$ there are finitely many resonant regions, due to (iv).
Outside the resonant regions, the conditions (i), (iii)  allow  one to apply the KAM theorem, and obtain KAM tori that are at a distance of order $O(\mu^{3/2})$ from one another. The resonant regions yield  gaps of size $O(\mu^{j/2})$ between the KAM tori, where $j$ is the  order of the resonance. Only the resonances of order $1$ and $2$ are of interest, as they produce gaps of size $O(\mu)$ and $O(\mu^{1/2})$ respectively.
 Inside each resonant region,  there exist primary KAM tori and secondary KAM tori (homotopically trivial relative to $\Lambda_\eps$).
They can be chosen to be $O(\mu^{3/2})$ from one another.

(3) The conditions (v) on the Melnikov potential imply that if $\mu_0$ is chosen small
enough then $W^u(\tilde\Lambda_\mu)$ intersects $W^s(\tilde\Lambda_\mu)$ transversally at an angle $O(\mu)$. Choose and fix a homoclinic channel $\tilde \Gamma_\mu$ as in \eqref{eq:dynamical channel} and \eqref{eq:transverse foliation}, and let $S^{\tilde \Gamma_\mu}$ denote the corresponding scattering map for the flow of $H_\mu$, as in Definition \ref{defn:scattering_map_flow}. It turns out that the scattering map has the property that there exists a constant $C>0$ such that for each $I'\in[I^-,I^+]$ there exists $I''\in[I^-,I^+]$ with $|I''-I'|>C\mu$ and $x',x''\in\tilde \Lambda_\mu$ with $I(x')=I', I(x'')=I''$, such that $S^{\tilde \Gamma_\mu}(x')=x''$. In the above, one can always choose $I''>I'$ or $I''>I'$, as one wishes. That is, there are always points whose $I$-coordinates in increased or decreased by the scattering map by $O(\mu)$.

(4)  Fix a Poincar\'e surface of section $\Sigma=\{(p, q,I,\phi, t),\, t=0\, (\textrm{mod }  2\pi) \}$, and let $F_\mu$ be the first return map to $\Sigma$.
By Theorem \ref{thm:normalpoincare} $\Lambda_0=\tilde\Lambda_0\cap\Sigma$ is a normally hyperbolic invariant manifold with boundary for $F_\mu$. The map $F_\mu$ restricted to $\Lambda_\mu$ satisfies a uniform twist condition, due to (iii).
Each $2$-dimensional  torus $\tilde{\mathcal{T}}_{I'}$ in $\tilde \Lambda_\mu$ intersects $\Sigma$ in  a $1$-dimensional $\mathcal{T}_{I'}$ in $ \Lambda_\mu$.
From Subsection \ref{subsection:scatteringreturn} it follows that $\Gamma_\mu=\tilde\Gamma_\mu\cap \Sigma$ is a homoclinic channel. The scattering map $S^{\Gamma_\mu}:U^-_\eps\to U^+_\eps$ associated to $\Gamma^\mu$ for $F_\mu$ is related to  $S^{\tilde \Gamma_\mu}$ by the relation given in Proposition \ref{prop:scatteringmapflow1}.

(5) Choose a sequence $\{\mathcal{T}_{I_j}\}_{j\in\mathbb{Z}}$ of KAM tori (primary or secondary) in $\Lambda_\mu$  with the following properties:  (a) Each leave $\mathcal{T}_{I_j}$ is within $O(\mu)$, relative to the $I$-variable, from the next leave $\mathcal{T}_{I_{j+1}}$, (b) $S^{\Gamma_\mu}$  takes each $\mathcal{T}_{I_j}$ transversally across $\mathcal{T}_{I_{j+1}}$, for all $n$. From Proposition \ref{prop:transversal}, $W^u(\mathcal{T}_{I_j})$ intersects transversally $W^s(\mathcal{T}_{I_{j+1}})$ at some point $z_j\in\Gamma_\mu$.
Thus, the set  $\{\mathcal{T}_{I_j}\}_{j\in\mathbb{Z}}$, together with the transverse homoclinic connections among consecutive tori, forms a transition chain.

(6)  Now we explain how Theorem \ref{lem:shadowing1} is applied to this problem. We have to show that the assumptions  of the theorem are met for some sequence of $2$-dimensional windows $\{D_j^-,D_j^+\}$ in $\Lambda_\eps$. Following the arguments in \cite{GideaL06}, for each $j$ we can construct a pair of windows $\hat D_j^-\subseteq U_\eps^-$, $\hat D_{j+1}^+\subseteq U_\eps^+$,  with $\hat D_j^-$ in some neighborhood of $\mathcal{T}_{I_j}$ and $\hat D_{j+1}^+$ in some neighborhood of $\mathcal{T}_{I_{j+1}}$ such that $\hat D_j^-$ is correctly aligned with $\hat D_{j+1}^+$ under $S^{\Gamma_\mu}$. The exit and entry directions of each window are $1$-dimensional.  The window  $\hat D_j^-$ is chosen to have its entry set components on some pair of tori $\mathcal{T}'_{I_j}$, $\mathcal{T}''_{I_j}$ neighboring $\mathcal{T}_{I_j}$ on both its sides in $\Lambda_\eps$, and  the window  $\hat D_{j+1}^+$ is chosen to have its exit components  set on some pair of tori $\mathcal{T}'_{I_{j+1}}$, $\mathcal{T}''_{I_{j+1}}$ neighboring $\mathcal{T}_{I_{j+1}}$ on both its sides in $\Lambda_\eps$.

Choose some positive integers $n_1^0,n_1^-,n^+_1$. Choose an open neighborhood $\mathcal{N}(\Lambda_\eps)$ of $\Lambda_\eps$ in $\Sigma$ on which the local linearization of the normally hyperbolic invariant manifold from Theorem 1 in \cite{PughS70} applies. Then for each $j$ there exists $N_j^->n_1^-$ such that $F^{-N^-_j}(z_{j})$ is contained in $\mathcal{N}(\Lambda_\eps)$, and there exists $N_{j+1}^+>n_1^+$ such that $F^{N^+_{j+1}}(z_j)$ is contained in $\mathcal{N}(\Lambda_\eps)$. Due to the compactness of $\Gamma_\mu,\Lambda_\mu$, one can choose the numbers $N_j^-, N_{j+1}^+$ uniformly bounded above by some $n_2^-,n^+_2$, respectively,  for all $j$.

Now let $D_j^-:=F^{-N^-_j}(\hat D_j^-)$ and $D_{j+1}^+:=F^{N^+_{j+1}}(\hat D_{j+1}^+)$. By construction, these sets satisfy   condition (i) of Theorem \ref{lem:shadowing1}. By Definition \ref{defn:transition_map map}, we have that $D_j^-$ is correctly aligned with $D_{j+1}^+$ under the transition map $S^{\Gamma_\mu}_{N_j^-,N_{j+1}^+}$.  This ensures condition (ii) of Theorem \ref{lem:shadowing1}.
Now using the twist condition, for each $j$ there exists a large enough $N^0_j$ such that $D_{j}^+$ is correctly aligned with
$D_{j}^-$. For this we use the fact that the exit set components of $D_{j}^+$ and the entry set components of $D_{j}^-$ lie on $\mathcal{T}'_{I_j}$, $\mathcal{T}''_{I_j}$, as in \cite{GideaR09}. This way we fulfill condition (iii) of Theorem \ref{lem:shadowing1}.  We can always choose $N_j^0$ as large as we want, and in particular $N_j^0>n^0_1$.  The number of iterates $N^0_j$ to achieve this correct alignment depends  only on the twist condition on $\Lambda_\eps$, on the sizes of the windows, and on the location of the windows $D_{j}^+, D_{j}^-$ about $\mathcal{T}_{I_j}$.  Since the sizes of the windows are uniformly bounded, we can choose the number $N^0_j$ so that they are all bounded above  by some $n^0_2$, i.e.,  $N_j^0<n^0_2$ for all $j$. At this point, given positive  integers  $n_1^0,n_1^-,n^+_1$, we have obtained positive integers $n_2^0>n_1^0$, $n_2^->n_1^-$, $n^+_2>n^+_1$, and sequences $N^0_j,N^-_j,N^+_j$ as specified by Theorem \ref{lem:shadowing1}. Then, for every $\eps>0$, there exists an orbit $\{F^N(z)\}_{n\in\mathbb{Z}}$ that visits the $\eps$-neighborhoods of the windows  $\{D_j^-,D_j^+\}$ in the prescribed order. In particular, we obtain diffusing orbits and symbolic dynamics.

We should note that  the usage of  the KAM theorem to construct correctly aligned windows in the  above argument is not necessary, as it is done here only to simplify the exposition. Instead, one can only use the averaging method  as in \cite{GideaL06} and construct the windows about the almost-invariant level sets of the averaged energy.   This alternative based on the averaging method lowers the regularity requirements from condition (i) above.

As an alternative to the diffusion mechanism  described above, we can cross the large gaps corresponding to the resonances of order 1 and 2, by following Birkhoff connecting orbits, as in \cite{GideaR09}, or Mather connecting orbits, as in \cite{GideaR11}. This mechanism allows one to get rid of the assumption (v.c) above.

\subsubsection*{The spatial circular restricted three-body problem.}

The second model is the spatial circular restricted three-body problem, in the case of the Sun-Earth system. We follow \cite{DelshamsGR10}.  This problem considers  the spatial motion of an infinitesimal mass under the gravitational influence of Sun and Earth, of masses $m_1, m_2$, respectively,  that are assumed to move on circular orbits about their center of mass. Let $\mu=m_2/(m_1+m_2)$. We can translate  the equations of motion of the infinitesimal body relative to a co-rotating frame that moves together with $m_1, m_2$,  and then describe the dynamics  by a Hamiltonian system of Hamiltonian function
\begin{equation}\label{eq:scrtbp}
H(x,y,z,p_x,p_y,p_z)=\frac{1}{2}(p_x^2+p_y^2+p_z^2)+y p_x-x
p_y-\frac{1-\mu}{r_1}-\frac{\mu}{r_2},
\end{equation} where  $r_1^2=(x-\mu)^2+y^2+z^2$ and $r_2^2=(x-\mu+1)^2+y^2+z^2$.
The system has five equilibrium points; one of them, denoted $L_1$,  lies between the two primaries. Its stability  is of
saddle $\times$ center $\times$ center type. We  focus our attention on the dynamics near $L_1$. At a given energy level, near $L_1$ we can distinguish a family of quasi-periodic orbits that are uniquely defined by their out-of-plane amplitude of the motion.
The objective is to show that there exits trajectories that move from near one such a quasi-periodic orbit to near another, in the prescribed order, and so to change the  out-of-plane amplitude of the motion near $L_1$ with zero cost. The mechanism to achieve such trajectories involves starting about  a quasi-periodic orbit about $L_1$ of some out-of-plane amplitude, moving the infinitesimal mass around one of the primaries, returning to $L_1$, and moving about  some other quasi-periodic orbit about $L_1$ of  a different out-of-plane amplitude, and so on.
Since this problem is not close to integrable,  the methods from perturbation theory do not apply.  The approach discussed below is semi-numerical.

(1) Fix an energy level $h$ slightly above the energy level of $L_1$, so that the dynamical channel between the  $m_1$-region and the $m_2$-region is open. Restrict the study of the dynamics to the $5$-dimensional energy manifold $M_h=\{H=h\}$.

(2) Compute numerically the $3$-dimensional normally hyperbolic invariant manifold $\tilde \Lambda\subseteq M_h$ by writing the Hamiltonian in   Birkhoff normal form in a neighborhood of $L_1$. By taking a high-order truncation of the normal form we obtain a  system of action-angle coordinates $(I_1,I_2,\phi_1,\phi_2)$ on $\tilde\Lambda$, where we can choose $I_1$ as the out-of-plane amplitude of a quasi-periodic orbit in $\tilde\Lambda$, and $I_2$ implicitly defined by the restriction to the energy level. As the truncated normal form is integrable, the numerically computed $\tilde \Lambda\subseteq M_h$  is filled with $2$-dimensional invariant tori $\tilde{\mathcal T}_{I_1}$ parametrized by the out-of-plane amplitude $I_1$. Dynamically, these invariant tori are only almost invariant.

(3) The Birkhoff normal form is also used to  compute  the stable and unstable manifolds $W^s(\tilde\Lambda)$, $W^u(\tilde\Lambda)$  of the normally hyperbolic invariant manifold $\tilde\Lambda$. One shows numerically that $W^s(\tilde\Lambda)$, $W^u(\tilde\Lambda)$  intersect transversally along some  homoclinic channel $\tilde\Gamma$. In particular,  one can compute the scattering map $S^{\tilde \Gamma}$ for the flow as in \cite{DelshamsMR08}. We choose some $\eps>0$ sufficiently small, and choose the $t_u,t_s$ to be the first times when $\phi^{-t_u}(\tilde\Gamma )$, $\phi^{-t_s}(\tilde\Gamma )$, respectively, land in the $\eps$-neighborhood of $\tilde\Lambda$. We then compute the corresponding transition map $S^{\tilde\Gamma}_{t^u,t^s}$ as in Definition \ref{defn:transition_map flow}.

(4)  We reduce the dimension of the problem by choosing a suitable Poincar\'e section, e.g. $\Sigma=\{\phi_2=0\}$.  Let $F$ denote the first return map to $\Sigma$. We have that $\Lambda =\Sigma\cap\tilde\Lambda $ is normally hyperbolic for $F$, and  $\Gamma =\Sigma\cap\tilde\Gamma$ is a homoclinic channel.
The intersections of the $2$-dimensional  tori  $\tilde{\mathcal T}_{I_1}$ with the  Poincar\'e section yields $1$-dimensional tori $\mathcal T_{I_1}$ in $\Lambda$, each torus  corresponding to a fixed value of $I_1$. The inner dynamics induced on the Poincar\'e section is a monotone twist map. Using the relationship between the transition map for a flow and the transition map for the return map from Proposition \ref{prop:transitionmapflow1}, we compute the transition map $S^{\Gamma }_{N^u,N^s}$ for $F$.
The image of each $1$-dimensional torus $\mathcal T_{I'_1}$  under $S^{\Gamma }_{N^u,N^s}$ is a curve along which the value of $I_1$ variable sometimes goes above and sometimes below the original value $I'_1$. See Figure \ref{fig: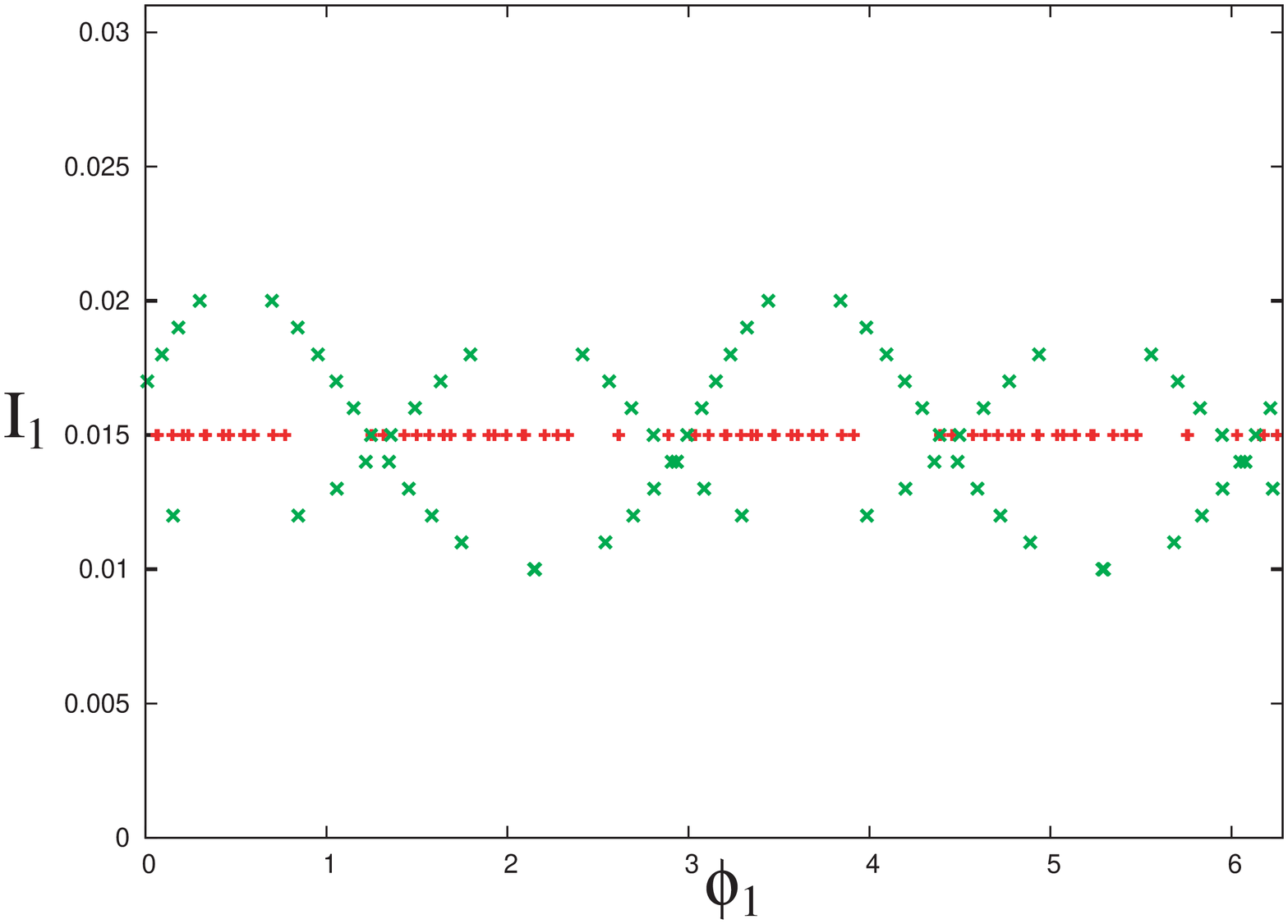}.
\begin{figure}
\centering
\includegraphics[width=0.5\textwidth]{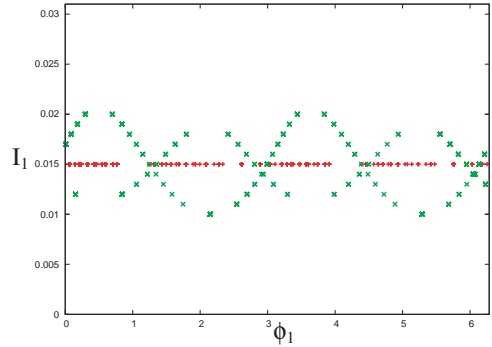}
\caption{The effect of the transition map on action level sets.}
\label{fig:sm_tor16.eps}
\end{figure}

(5) To design trajectories that visit some finite collection of $I_1$-level sets in the prescribed order, we first use the transition map $S^{\Gamma}_{N^u,N^s}$ to move between level sets. We obtain pairs of points $x_j,x_{j+1}$ with $I(x_j)=(I_1)_j$, $I(x')=(I_1)_{j+1}$, and
$S^{\Gamma}_{N^u,N^s}(x_{j})=x_{j+1}$, moving from a level set $(I_1)_{j}$ to a level set $(I_1)_{j+1}$. Using continuation, we construct a window $D^-_j$ about  the level set $(I_1)_{j}$,  and a window $D^+_{j+1}$ about  the level set  $(I_1)_{j+1}$, such that $D^-_j$ is correctly aligned with $D^+_{j+1}$ under $S^{\Gamma}_{N^u,N^s}$. Similarly, we construct  a window $D^-_{j+1}$ about  the level set $(I_1)_{j+1}$,  and a window $D^+_{j+2}$ about  a level set  $(I_1)_{j+2}$, such that $D^-_{j+1}$ is correctly aligned with $D^+_{j+2}$ under $S^{\Gamma}_{N^u,N^s}$. To align $D^+_{j+1}$ with $D^-_{j+1}$  we take some large number of iterates $N^0_j$ such that $D^+_{j+1}$ is correctly aligned with $D^-_{j+1}$ under $F^{N^0_j}$. We obtain the situation described in Theorem \ref{lem:shadowing1}. In particular, we obtain diffusing orbits for which the action variable $I_1$ increases as much as possible.

We should note that, since the correct alignment of windows is robust, the fact that the tori $\mathcal{T}_{I_1}$ are only almost-invariant
does not matter. This makes the method suitable for a computer assisted proof.

In both examples, the main advantage of using the transition map and the Theorem \ref{lem:shadowing1} is that, via these tools, one only needs to construct windows of lower dimension (the dimension of the normally hyperbolic invariant manifold) that are correctly aligned either under the transition map, or under some power of the inner map. Both these maps are defined on the lower dimensional normally hyperbolic invariant manifold. The result from Theorem \ref{lem:shadowing1} is the existence of a certain trajectory that lives in the full dimensional phase space.

\medskip
Received xxxx 20xx; revised xxxx 20xx.
\medskip
\end{document}